\newcommand{\sgp}{semi\-group}
\newcommand{\sgps}{semi\-groups}
\newcommand{\fs}{finite \sgp}
\newcommand{\fss}{finite \sgps}
\newcommand{\fb}{finitely based}
\newcommand{\ib}{identity basis}
\newcommand{\nfb}{non\-finitely based}
\newcommand{\is}{involuted semi\-group}
\newcommand{\iss}{involuted semi\-groups}
\newcommand{\fbp}{finite basis problem}
\newcommand{\op}{order preserving}
\DeclareSymbolFont{rsfscript}{OMS}{rsfs}{m}{n}
\DeclareSymbolFontAlphabet{\mathrsfs}{rsfscript}
\theoremstyle{plain}
\newtheorem{theorem}{Theorem}
\newtheorem{lemma}[theorem]{Lemma}
\newtheorem{proposition}[theorem]{Proposition}
\newtheorem{corollary}[theorem]{Corollary}
\theoremstyle{remark}
\newtheorem{question}{Question}
\newtheorem*{remarks}{Remarks}
\newtheorem*{remark}{Remark}
\newenvironment{proof*}{\trivlist\item[\hskip\labelsep{\emph{Proof}.}]
}{\endtrivlist}
\title[The Finite Basis Problem for Kiselman Monoids]{The Finite Basis Problem\\ for Kiselman Monoids}
\author{D. N. Ashikhmin, M. V. Volkov}
\address{(D. N. Ashikhmin, M. V. Volkov) Institute of Mathematics and Computer Science, Ural Federal University, Lenina 51, 620000 Ekaterinburg, Russia} \email{dmitry11ksi@gmail.com, mikhail.volkov@usu.ru}
\author{Wen Ting Zhang}
\address{(Wen Ting Zhang) School of Mathematics and Statistics, Lanzhou University, Lanzhou, Gansu 730000, China; Key Laboratory of Applied Mathematics and Complex Systems, Lanzhou, Gansu 730000, China; Department of Mathematics and Statistics, La Trobe University, VIC 3086, Australia}
\email{zhangwt@lzu.edu.cn}
\thanks{M. V. Volkov acknowledges support from the Presidential Programme ``Leading Scientific Schools of the Russian Federation'', project no.\ 5161.2014.1, the Russian Foundation for Basic Research, project no.\ 14-01-00524, the Ministry of Education and Science of the Russian Federation, project no.\ 1.1999.2014/K, and the Competitiveness Program of Ural Federal University. Wen Ting Zhang acknowledges support from the National Natural Science Foundation of China, project nos.\ 11371177, 11401275, the State Scholarship Fund of China, and ARC Discovery Project DP1094578.}
\begin{document}

\begin{abstract}
In an earlier paper, the second-named author has described the identities holding in the so-called Catalan monoids. Here we extend this description to a certain family of Hecke--Kiselman monoids including the Kiselman monoids $\mathcal{K}_n$. As a consequence, we conclude that the identities of $\mathcal{K}_n$ are nonfinitely based for every $n\ge 4$ and exhibit a finite identity basis for the identities of each of the monoids $\mathcal{K}_2$ and $\mathcal{K}_3$.
\end{abstract}

\maketitle

\section{Kiselman and Hecke--Kiselman monoids}
\label{sec:intro}

Kiselman \cite{Ki02} has found all possible compositions of three natural closure operators playing a distinguished role in convex analysis. Namely, let $E$ be a real topological vector space and let $f$ be any function on $E$ with values in the extended real line $\mathbb{R}\cup\{+\infty,-\infty\}$. The first of the three  operators associates to $f$ its \emph{convex hall} or \emph{largest convex minorant} $c(f)$, defined as the supremum of all its convex minorants. Alternatively, it can be shown that for each $x\in E$, the value of $c(f)$ at the point $x$ can be calculated as follows:
\[
c(f)(x)=\inf\left\{\sum_{i=1}^N\lambda_if(x_i)\ \Bigl|\Bigr.\ N\ge1,\ \lambda_i>0,\ f(x_i)<+\infty,\ \sum_{i=1}^N\lambda_ix_i=x\right\}.
\]
The second operator is that of taking the \emph{largest lower semicontinuous minorant} $\ell(f)$ of the function, defined as the supremum of all its lower semicontinuous minorants with respect to the topology on $E$. It can be verified that for each $x\in E$, the value of $\ell(f)$ at $x$ is given by the formula
\[
\ell(f)(x)=\liminf_{y\to x}f(y).
\]
This corresponds to taking the closure of the \emph{epigraph} of the function $f$ (the set of points lying on or above the graph of $f$) with respect to the Cartesian product of the topology on $E$ and the usual topology on $\mathbb{R}$.

The third operator is the operator $m$, defined as
\[
m(f)(x)=\begin{cases}
f(x)   &\text{if }\ f(y)>-\infty\  \text{ for all }\ y\in E;\\
-\infty&\text{otherwise.}
\end{cases}
\]
The operators $c,\ell,m$ generate a monoid $G(E)$ with composition as multiplication. Kiselman \cite[Theorem~4.1]{Ki02} has shown that the order of $G(E)$ can be 1, 6, 15, 16, 17, or 18, depending on the dimension of $E$ and its topology. In particular, for every normed space $E$ of infinite dimension, $G(E)$ consists of 18 elements and has the following presentation (as a monoid):
\begin{multline}
\label{eq:k3}
G(E) = \langle c, \ell, m \mid c^2 = c,\ \ell^2 =\ell,\ m^2 = m,\\
c\ell c = \ell c \ell = \ell c,\ cmc = mcm = mc,\ \ell m\ell = m\ell m = m\ell\rangle.
\end{multline}

Ganyushkin and Mazorchuk (unpublished) have suggested to consider analogous presentations with arbitrarily many generators. Namely, for each $n\ge2$, they have defined the \emph{Kiselman monoid} $\mathcal{K}_n$ as follows:
\begin{multline}
\label{eq:kn}
\mathcal{K}_n = \langle a_1,a_2,\dots,a_n \mid a_i^2=a_i,\ i=1,\dots, n;\\
a_ia_ja_i=a_ja_ia_j=a_ja_i,\ 1\le i < j \le n\rangle.
\end{multline}
Clearly, the monoid defined by the presentation \eqref{eq:k3} is isomorphic to $\mathcal{K}_3$---the bijection $c\mapsto a_1$, $\ell\mapsto a_2$, $m\mapsto a_3$ extends to an isomorphism. Algebraic properties of the family of monoids $\{\mathcal{K}_n\}_{n=2,3,\dots}$ have been studied in depth by Kudryavtseva and Mazorchuk~\cite{KM09}. Here we only mention that the monoid $\mathcal{K}_n$ is finite for every $n$ \cite[Theorem~3]{KM09} but its exact order (as a function of~$n$) is not yet known.

A further generalization motivated by some connections with representation theory has been suggested by Ganyushkin and Mazorchuk in~\cite{GM11}. Fix an integer $n\ge2$ and take an arbitrary anti-reflexive binary relation $\Theta$ on the set $\{1,2,\dots,n\}$. The \emph{Hecke--Kiselman monoid} $\mathcal{HK}_\Theta$ corresponding to $\Theta$ is the monoid generated by elements $a_1,a_2,\dots,a_n$ subject to the relations
\begin{align*}
&a_i^2=a_i                  &&\text{for each } i=1,\dots,n;\\
&a_{i}a_{j}=a_{j}a_{i}      &&\text{if } (i,j),(j,i)\notin\Theta;\\
&a_ia_ja_i=a_ja_ia_j        &&\text{if } (i,j),(j,i)\in\Theta;\\
&a_ia_ja_i=a_ja_ia_j=a_ja_i &&\text{if } (i,j)\notin\Theta,\, (j,i)\in\Theta.
\end{align*}
Obviously, the Kiselman monoid $\mathcal{K}_n$ arises as a special case of this construction when the role of $\Theta$ is played by the strict ``descending'' order $\Theta_K=\{(j,i)\mid 1\le i < j \le n\}$. Another important special case is the so-called \emph{Catalan monoid}, denoted $\mathcal{C}_{n+1}$, that is generated by elements $a_1,a_2,\dots,a_n$ subject to the relations
\begin{align*}
&a_i^2=a_i                  &&\text{for each } i=1,\dots,n;\\
&a_{i}a_{j}=a_{j}a_{i}      &&\text{if }|i-j|\ge 2,\ i,j=1,\dots,n;\\
&a_ia_{i+1}a_i=a_{i+1}a_ia_{i+1}=a_{i+1}a_i &&\text{for each } i=1,\dots,n-1.
\end{align*}
Clearly, $\mathcal{C}_{n+1}$ is nothing but the Hecke--Kiselman monoid corresponding to the covering relation $\Theta_C=\{(i+1,i)\mid i=1,2,\dots,n-1\}$ of the order $\Theta_K$. Solomon~\cite{So96} has shown that $\mathcal{C}_{n+1}$ is isomorphic to the monoid of all \op\ and decreasing transformations of the chain
\[
1<2<\dots<n<n+1.
\]
(Recall that transformation $\alpha$ of a partially ordered set $\langle Q,\le\rangle$ is called \emph{\op} if $q\le q'$ implies $q\alpha\le q'\alpha$ for all $q,q'\in Q$, and \emph{decreasing} if $q\alpha\le q$ for every $q\in Q$.) Hence the monoid $\mathcal{C}_n$ is finite for every $n$; moreover, it can be shown that the cardinality of $\mathcal{C}_n$ is the $n$-th Catalan number $\frac1{n+1}\binom{2n}n$; see, e.g., \cite[Theorem~3.1]{Hi93}.

In~\cite{Vo04}, the second-named author has investigated the identities holding in each Catalan monoid and solved the \fbp\ for these identities. Here we extend this study to Kiselman monoids and, more generally, to all Hecke--Kiselman monoids $\mathcal{HK}_\Theta$ such that $\Theta_C\subseteq\Theta\subseteq\Theta_K$.

The paper is basically self-contained and is structured as follows. In Section~\ref{sec:preliminaries} we briefly discuss the finite basis problem in order to place the present study into a proper perspective. We also summarize a few properties of Catalan, Kiselman and Hecke--Kiselman monoids that are essential for the proof of our main result. This result is formulated and proved in Section~\ref{sec:new} while Section~\ref{sec:open problems} collects some additional results and a few related open questions.

\section{Preliminaries}
\label{sec:preliminaries}

A \emph{\sgp\ identity} is just a pair of \emph{words}, i.e., elements of the free semigroup $X^+$ over an alphabet $X$. In this paper identities are written as ``bumped'' equalities such as $u\bumpeq v$. A \sgp\ $S$ \emph{satisfies} $u\bumpeq v$ where $u,v\in X^+$ if for every homomorphism $\varphi\colon X^+\to S$, the equality $u\varphi=v\varphi$ is valid in $S$; alternatively, we say that $u\bumpeq v$ \emph{holds} in $S$.

Given any system $\Sigma$ of \sgp\ identities, we say that an identity $u\bumpeq v$ \emph{follows} from $\Sigma$ if every \sgp\ satisfying all identities of $\Sigma$ satisfies the identity $u\bumpeq v$ as well; alternatively, we say that $\Sigma$ \emph{implies} $u\bumpeq v$. A subset $\Sigma'\subseteq\Sigma$ is called an \emph{\ib} for $\Sigma$ if each identity in $\Sigma$ follows from $\Sigma'$. Given a \sgp\ $S$, its \emph{equational theory} is the set of all identities holding in $S$. A \sgp\ is said to be \emph{\fb} if its equational theory admits a finite \ib;  otherwise it is called \emph{\nfb}.

It was discovered by Perkins \cite{Pe66,Pe69} that a \fs\ can be \nfb. In fact, \sgps\ are the only ``classical'' algebras for which finite \nfb\ objects exist: finite groups \cite{OaPo64}, finite associative and Lie rings \cite{Kr73,Lv73,BaOl75}, finite lattices \cite{McK70} are all \fb. This circumstance gave rise to numerous
investigations whose final aim was to classify all \fss\ with respect to the property of having/having no finite \ib. In spite of many researchers' efforts, the complete classification has not yet been achieved, and therefore, it appears to be reasonable to look at some restricted versions of the problem where one focuses on certain important classes of \fss. This paper contributes to the study of the \fbp\ within the class of finite $\mathrsfs{J}$-trivial monoids.

Recall that a monoid $M$ is said to be $\mathrsfs{J}$-\emph{trivial} if every principal ideal of $M$ has a unique generator, that is, $MaM=MbM$ implies $a=b$ for all $a,b\in M$. Finite $\mathrsfs{J}$-trivial monoids attract much attention because of their distinguished role in algebraic language theory~\cite{Si72,Si75} and representation theory~\cite{DHST11}. The \fbp\ remains extremely hard when restricted to this class of monoids. In fact, one of the very first examples of \nfb\ \fss\ constructed by Perkins~\cite{Pe69} was a $\mathrsfs{J}$-trivial monoid, and further studies of the underlying construction have revealed a very complicated behaviour of $\mathrsfs{J}$-trivial monoids with respect to the finite basis property; see \cite[Subsection~4.2]{Vo01} for an overview of related results.

The Catalan monoids $\mathcal{C}_n$ defined in Section~\ref{sec:intro} are known to be $\mathrsfs{J}$-trivial, and moreover, they serve as sort of universal objects for the class of all finite $\mathrsfs{J}$-trivial monoids. Recall that a monoid $M$ is said to \emph{divide} a monoid $N$ if $M$ is a homomorphic image of a submonoid in $N$. Straubing~\cite{St80} has shown that a finite monoid is $\mathrsfs{J}$-trivial if and only if it divides some monoid of \op\ and decreasing transformations of a finite partially ordered set. Pin \cite[Theorem 4.1.10]{Pi86} has observed that this result can be made more concrete: a finite monoid is $\mathrsfs{J}$-trivial if and only if it divides some Catalan monoid $\mathcal{C}_n$. The \fbp\ for the monoids  $\mathcal{C}_n$ has been solved by the second-named author \cite{Vo04} on the basis of some results by Blanchet-Sadri~\cite{Bl93,Bl94}.
The following summarizes this solution.

\begin{theorem}
\label{thm:fbp for Cn}
\emph{a)} The identities
\begin{equation}
\label{eq:basis c3}
xyxzx\bumpeq xyzx,\ (xy)^2\bumpeq (yx)^2
\end{equation}
form an \ib\ of the monoid $\mathcal{C}_3$.

\emph{b)} The identities
\begin{gather}
xyx^2zx\bumpeq xyxzx,\ xyzx^2tz\bumpeq xyxzx^2tz,\ zyx^2ztx\bumpeq zyx^2zxtx,\notag\\
(xy)^3\bumpeq (yx)^3
\label{eq:basis c4}
\end{gather}
form an \ib\ of the monoid $\mathcal{C}_4$.

\emph{c)} The monoids $\mathcal{C}_n$ with $n\ge5$ are \nfb.
\end{theorem}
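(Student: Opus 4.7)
The unifying tool will be Solomon's realization of $\mathcal{C}_n$ as the monoid of order-preserving decreasing transformations on the chain $1<2<\cdots<n$, which converts every identity check into a concrete combinatorial computation: $u\bumpeq v$ holds in $\mathcal{C}_n$ if and only if $u$ and $v$ induce the same transformation under every assignment of the variables to such maps. This gives a finitary decision procedure for membership in the equational theory of each $\mathcal{C}_n$ and, more importantly, provides the ``ground truth'' against which candidate bases and separating algebras are measured.

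For parts (a) and (b) I would first verify by direct computation that the displayed identities hold; for $\mathcal{C}_3$ this is a short case analysis on the 5-element monoid, and for $\mathcal{C}_4$ it is a slightly longer but still finite check on the 14-element monoid. The substantive half is completeness: one must show that, using only the listed identities, every word can be rewritten into a normal form that uniquely encodes its image in $\mathcal{C}_n$ under every assignment. For $\mathcal{C}_3$ the identity $xyxzx\bumpeq xyzx$ removes ``sandwiched'' repetitions and $(xy)^2\bumpeq(yx)^2$ captures the bounded commutation; after these reductions two surviving normal forms can be compared by evaluation. For $\mathcal{C}_4$ the normal form is finer and all four identities in~\eqref{eq:basis c4} are needed, with the two ``crossing'' identities doing the bulk of the bookkeeping between non-adjacent variables.

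For part (c) the strategy, following Blanchet-Sadri~\cite{Bl93,Bl94}, is to exhibit, for each fixed $n\ge 5$, an infinite sequence of identities $u_m\bumpeq v_m$ that hold in $\mathcal{C}_n$ and whose differences involve a pattern of unbounded ``depth'' (loosely, nested alternations of variables indexed by $m$). One then argues that any finite subset $\Sigma$ of the equational theory of $\mathcal{C}_n$ controls only identities up to a uniform depth bound $k=k(\Sigma)$; choosing $m>k$, one must produce a $\mathrsfs{J}$-trivial monoid (realised as a divisor of a large power of $\mathcal{C}_n$, exploiting Pin's characterisation) that satisfies every identity in $\Sigma$ while separating $u_m$ from $v_m$. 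The main obstacle, and the place where Blanchet-Sadri's specific constructions are indispensable, is this last step---encoding arbitrarily deep patterns inside divisors of $\mathcal{C}_n$. This is precisely why the threshold is $n=5$: for $n\le 4$ the combinatorial room in the chain is too limited to support such an encoding, and that very limitation is what forces the finite bases in parts (a) and (b).
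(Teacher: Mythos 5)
There is a genuine gap, and it sits in the step you yourself flag as the crux of part (c). You propose to separate $u_m$ from $v_m$ by a $\mathrsfs{J}$-trivial monoid ``realised as a divisor of a large power of $\mathcal{C}_n$'' via Pin's characterisation. That cannot work: identities are inherited by submonoids, homomorphic images and direct powers, so every divisor of every power of $\mathcal{C}_n$ satisfies \emph{all} identities of $\mathcal{C}_n$, in particular $u_m\bumpeq v_m$, and hence can never witness that $\Sigma\not\vdash u_m\bumpeq v_m$. Any separating algebra must lie \emph{outside} the variety generated by $\mathcal{C}_n$ while still satisfying the finite set $\Sigma$; constructing such algebras for each finite $\Sigma$ is exactly the hard content of Blanchet-Sadri's results \cite{Bl93,Bl94}, which the present paper invokes rather than reproves. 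Likewise, your ``uniform depth bound $k(\Sigma)$'' is asserted, not argued, and the closing heuristic (``for $n\le4$ the chain lacks combinatorial room'') is not a proof of anything; the actual dividing line is that the scattered-subword theories $J_2$ and $J_3$ are finitely based while $J_n$ for $n\ge4$ is not.

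This points to the missing idea throughout: the theorem is not proved in this paper by evaluation in transformation monoids, but by combining Proposition~\ref{prop:identities of Cn} (the equational theory of $\mathcal{C}_{n+1}$ is $J_n$, i.e.\ is governed by equality of scattered subwords of length at most $n$, from \cite{Vo04}) with Blanchet-Sadri's finite/non-finite basis results for the theories $J_n$ (see also the alternative proofs by O.~Sapir cited in the Remarks). Your sketch never isolates this characterisation, and without it the completeness half of (a) and (b) is only a promise: checking that \eqref{eq:basis c3} and \eqref{eq:basis c4} hold on the $5$- and $14$-element monoids is indeed routine, but you still must exhibit a concrete normal form (or derivation scheme) and prove that the two (resp.\ four) listed identities derive \emph{every} identity in $J_2$ (resp.\ $J_3$); that is a substantial combinatorial argument, not a corollary of having a decision procedure for membership in the equational theory. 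As written, the proposal would need to import essentially all of \cite{Bl93,Bl94} (or \cite{Sa1,Sa2}) to close parts (a), (b) and (c), and part (c) as formulated would fail outright because of the divisor error.
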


\begin{remarks}
1. For compactness, when presenting the identity bases in Theorem~\ref{thm:fbp for Cn}, we have taken the liberty to use bases in the sense of \emph{monoid} identities. For instance, we have not included in~\eqref{eq:basis c3} the identity $x^3\bumpeq x^2$ (which does hold in $\mathcal{C}_3$) because in the monoid setting it can be deduced from the identity $xyxzx\bumpeq xyzx$ by substituting 1 for $y$ and $z$. However, as observed in~\cite[p.173]{Vo01}, the property of a monoid to be \fb\ or \nfb\ does not depend on using the semigroup or the monoid deduction rules.

2. Recently, alternative (and quite transparent) proofs of the results by Blanchet-Sadri utilized in Theorem~\ref{thm:fbp for Cn} have been suggested by Olga Sapir, see \cite[Proposition 4.2 and Theorem 7.2]{Sa1} and \cite[Corollary~6.4]{Sa2}.
\end{remarks}

We also need a characterization of equational theories of Catalan monoids. Let $X^*$ stand for the free monoid over an alphabet $X$. A word $u=x_1\cdots x_k$ with $x_1,\dots,x_k\in X$ is a \emph{scattered subword} of $v\in X^*$ whenever there exist words $v_0,v_1,\dots,v_{k-1},v_k\in X^*$ such that $v=v_0x_1v_1\cdots v_{k-1}x_kv_k$; in other terms,  one can extract $u$ treated as a sequence of letters from the sequence $v$. We denote by $J_n$ the set of all identities $w\bumpeq w'$ such that the words $w$ and $w'$ have the same set of scattered subwords of length at most $n$. The following is a combination of~\cite[Proposition~4]{Vo04} and~\cite[Corollary~2]{Vo04}.

\begin{proposition}
\label{prop:identities of Cn}
The equational theory of the monoid $\mathcal{C}_{n+1}$ is equal to $J_n$.
\end{proposition}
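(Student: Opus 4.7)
The plan is to establish the two inclusions $\mathrm{Eq}(\mathcal{C}_{n+1}) \subseteq J_n$ and $J_n \subseteq \mathrm{Eq}(\mathcal{C}_{n+1})$ separately, relying throughout on Solomon's identification of $\mathcal{C}_{n+1}$ with the monoid of order-preserving decreasing transformations of the chain $[n+1] := \{1 < 2 < \cdots < n+1\}$.

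For the completeness direction $\mathrm{Eq}(\mathcal{C}_{n+1}) \subseteq J_n$, I would argue the contrapositive. Suppose $w, w' \in X^*$ differ in their sets of scattered subwords of length at most $n$; without loss of generality let $u = x_1 x_2 \cdots x_k$ with $k \le n$ be a scattered subword of $w$ but not of $w'$. Define a substitution $\varphi \colon X^* \to \mathcal{C}_{n+1}$ by letting $\varphi(x)$, for each letter $x \in X$, be the transformation of $[n+1]$ that sends $k+2 - i \mapsto k+1 - i$ for every index $i$ with $x_i = x$ and fixes all other points; since $k \le n$, each such transformation does lie in $\mathcal{C}_{n+1}$. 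By design, the point $k+1$ drops strictly under a letter $y$ of $w$ precisely when the current orbit value equals $k+2-i$ for some index $i$ with $x_i = y$, so tracing the orbit through $w\varphi$ shows that $k+1$ descends to $1$ if and only if $x_1, x_2, \dots, x_k$ can be located in $w$ in this order, i.e., if and only if $u$ is a scattered subword of $w$. Consequently $(k+1)(w\varphi) = 1$, whereas $(k+1)(w'\varphi) \ge 2$, giving $w\varphi \ne w'\varphi$.

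For the soundness direction $J_n \subseteq \mathrm{Eq}(\mathcal{C}_{n+1})$, I fix an arbitrary homomorphism $\varphi \colon X^* \to \mathcal{C}_{n+1}$ and establish the key lemma: for every $i, \ell \in [n+1]$ with $\ell < i$ and every $w \in X^*$, the inequality $i(w\varphi) \le \ell$ holds if and only if $w$ admits a scattered subword $z_1 z_2 \cdots z_{i-\ell}$ such that the sequence $i_0 := i$, $i_j := i_{j-1}(z_j\varphi)$ is strictly decreasing with $i_{i-\ell} \le \ell$. The forward implication is obtained by extracting the positions of the first $i - \ell$ strict drops of the orbit of $i$ along $w$; the reverse implication is a short induction exploiting that each $z_j\varphi$ is both order-preserving and decreasing, so that an upper bound on the orbit value at the position of the $j$-th embedded letter propagates correctly through the intervening letters.

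Since every witnessing subword has length $i - \ell \le n$, the predicate ``$i(w\varphi) \le \ell$'' depends only on the scattered subwords of $w$ of length at most $n$; hence if the identity $w \bumpeq w'$ lies in $J_n$, then $i(w\varphi) = i(w'\varphi)$ for every $i \in [n+1]$, so $w\varphi = w'\varphi$ and soundness follows. I expect the principal technical obstacle to lie in the key lemma for soundness: the combined use of the order-preserving and decreasing properties to propagate bounds through an arbitrary scattered subword embedding requires careful bookkeeping, whereas the completeness construction reduces to the behaviour of one explicit substitution.
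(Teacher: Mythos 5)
Your completeness half is correct: the substitution you build from a length-$k$ subword $u=x_1\cdots x_k$ ($k\le n$) missing from one side does land in $\mathcal{C}_{n+1}$ (any map fixing some points and shifting the others down by one is order preserving and decreasing), and tracing the orbit of $k+1$ is exactly the greedy-embedding argument, so the two sides act differently on $k+1$. Note that the paper itself offers no proof of this proposition---it is imported from \cite{Vo04}---so a direct argument via Solomon's realization is the right thing to attempt, and your plan is the standard one.

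The gap is in the key lemma of your soundness half: as stated, its forward implication is false, and the step you propose for it fails. You demand a witnessing scattered subword of length \emph{exactly} $i-\ell$ along which the orbit of $i$ strictly decreases, to be obtained by ``extracting the positions of the first $i-\ell$ strict drops''. But $\varphi$ sends letters to arbitrary order-preserving decreasing maps, not to elementary generators, so one letter can drop the orbit by more than one and there may be fewer than $i-\ell$ strict drops in all of $w$. Concretely, with $n+1=3$ let $w$ be the single letter $z$ with $z\varphi\colon 1,2,3\mapsto 1,1,1$, and take $i=3$, $\ell=1$: then $i(w\varphi)=1\le\ell$, yet $w$ has no scattered subword of length $i-\ell=2$ whatsoever. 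The repair is to allow witnesses of length \emph{at most} $i-\ell$: one shows that $i(w\varphi)\le\ell$ if and only if $w$ has a scattered subword $z_1\cdots z_r$ with $r\le i-\ell$ (hence $r\le n$) and $i(z_1\varphi)(z_2\varphi)\cdots(z_r\varphi)\le\ell$. For the forward direction record only those letters of $w$ at which the orbit of $i$ strictly drops, stopping as soon as the value is $\le\ell$; since every drop loses at least one, this uses at most $i-\ell$ letters, and the orbit along the extracted subword reproduces the successive distinct values of the orbit along $w$. Your reverse-direction induction (propagating upper bounds through the intervening letters using order preservation and decrease) is fine as is, works for witnesses of any length, and needs no strictness hypothesis. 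With the lemma so corrected, the predicate ``$i(w\varphi)\le\ell$'' is still determined by the scattered subwords of $w$ of length at most $n$, and your concluding deduction that $w\sim_n w'$ forces $w\varphi=w'\varphi$ goes through unchanged.
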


Now we turn to Kiselman monoids. We represent elements of the Kiselman monoid $\mathcal{K}_n$ by words over the alphabet $A_n=\{a_1,a_2,\dots,a_n\}$. For a word $w$, its \emph{content} $c(w)$ is the set of all letters that occur in $w$ (in particular, the content of the empty word 1 is the empty set). The relations in~\eqref{eq:kn} are such that the same letters occur in both sides of each relation whence $c(w)=c(w')$ whenever $w,w'\in A_n^*$ represent the same element of~$\mathcal{K}_n$. Therefore one can speak of the content of an element from $\mathcal{K}_n$. The following reduction rules underlie all combinatorics of Kiselman monoids.

\begin{lemma}[{\mdseries\cite[Lemma~1]{KM09}}]
\label{lem:reduction}
\emph{a)} Let $s\in\mathcal{K}_n$ and $c(s)\subseteq\{a_{i+1},\dots,a_n\}$ for some $i$. Then $a_isa_i=sa_i$.

\emph{b)} Let $t\in\mathcal{K}_n$ and $c(t)\subseteq\{a_1,\dots,a_{i-1}\}$ for some $i$. Then $a_ita_i=a_it$.
\end{lemma}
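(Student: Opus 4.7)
The plan is to prove both parts by induction on the length $|w|$ of a word $w\in A_n^*$ representing the element $s$ (respectively $t$). The defining relations of $\mathcal{K}_n$ furnish two complementary rewriting moves: for $p<q$, the relation $a_p a_q a_p=a_q a_p$ absorbs a leading $a_p$ past a single $a_q$, while $a_q a_p a_q=a_q a_p$ absorbs a trailing $a_q$ past a single $a_p$. Part (a) calls for the first form and a left-decomposition of $s$; part (b) calls for the second form and a right-decomposition of $t$.

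For part (a), the cases $|s|=0$ and $|s|=1$ are handled at once by $a_i^2=a_i$ and the relation $a_i a_k a_i=a_k a_i$, which is available because every letter $a_k$ occurring in $s$ satisfies $k>i$. For $|s|\ge 2$ I would decompose $s=a_k s'$ with $k>i$ and $c(s')\subseteq\{a_{i+1},\dots,a_n\}$. The inductive hypothesis gives $s' a_i=a_i s' a_i$; inserting this on the right of $a_i a_k$ yields $a_i s a_i = a_i a_k (a_i s' a_i) = (a_i a_k a_i)s' a_i$, the defining relation $a_i a_k a_i=a_k a_i$ strips the leading $a_i$ to give $a_k(a_i s' a_i)$, and one more application of the hypothesis collapses this to $a_k s' a_i=s a_i$.

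Part (b) is the dual argument: after the same base cases, write $t=t' a_k$ with $k<i$ and $c(t')\subseteq\{a_1,\dots,a_{i-1}\}$. The inductive hypothesis now reads $a_i t' a_i=a_i t'$, which I would use in the reversed form $a_i t'=a_i t' a_i$ to insert an $a_i$ just before the final letter, obtaining
$$a_i t a_i=a_i t' a_k a_i=a_i t' a_i a_k a_i=a_i t'(a_i a_k a_i).$$
The relation $a_i a_k a_i=a_i a_k$, valid because $k<i$ (so that $k$ plays the role of the smaller index), transforms this into $a_i t' a_i a_k$, and a final use of the hypothesis in its original form $a_i t' a_i=a_i t'$ reduces it to $a_i t' a_k=a_i t$.

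There is no real obstacle here; the whole content of the lemma is the pair of defining relations, and the only genuine choice is to match the decomposition of the middle factor to the shape of the relation one wants to apply. The left-decomposition $s=a_k s'$ lines up naturally with $a_i a_k a_i=a_k a_i$ for part (a), while the right-decomposition $t=t' a_k$ lines up with $a_i a_k a_i=a_i a_k$ for part (b); once that alignment is fixed, both inductions reduce to a three-line computation.
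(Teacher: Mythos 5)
Your proof is correct: both inductions are sound, the base cases follow from $a_i^2=a_i$ and the defining relations, and in each inductive step the reversed use of the hypothesis to insert an extra $a_i$, followed by $a_ia_ka_i=a_ka_i$ (for $k>i$) resp.\ $a_ia_ka_i=a_ia_k$ (for $k<i$) and one more application of the hypothesis, gives exactly the claimed equalities; the induction on the length of a representing word is legitimate because every word representing $s$ (resp.\ $t$) has the same content, so its letters all lie in the prescribed range. Note that the paper itself does not prove this statement but imports it as Lemma~1 of \cite{KM09}, so there is no in-paper argument to compare against; your derivation is a self-contained justification in the same elementary spirit as the paper's proof of Lemma~\ref{lem:reduction1}, which inducts on occurrences of $a_i$ and takes the present lemma as its input.
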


For applications in the present paper we modify Lemma~\ref{lem:reduction} as follows.
\begin{lemma}
\label{lem:reduction1}
\emph{a)} Let $s\in\mathcal{K}_n$ and $c(s)\subseteq\{a_i,a_{i+1},\dots,a_n\}$ for some $i$. Then $sa_i=s'a_i$, where $s'$ is obtained from $s$ by removing all occurrences of $a_i$ if there were some.

\emph{b)} Let $t\in\mathcal{K}_n$ and $c(t)\subseteq\{a_1,\dots,a_{i-1},a_i\}$ for some $i$. Then $a_it=a_it'$, where $t'$ is obtained from $t$ by removing all occurrences of $a_i$ if there were some.
\end{lemma}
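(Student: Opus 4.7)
The plan is to deduce Lemma~\ref{lem:reduction1}(a) from Lemma~\ref{lem:reduction}(a) by an induction on the number of occurrences of $a_i$ in~$s$; part~(b) then follows by the dual argument using Lemma~\ref{lem:reduction}(b).

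More precisely, I would factor a chosen word representative of $s$ with respect to the letter~$a_i$: write
\[
s = w_0\, a_i\, w_1\, a_i\, w_2 \cdots a_i\, w_k,
\]
where $k$ is the number of occurrences of $a_i$ in the representative and each factor $w_j$ is a (possibly empty) word over the alphabet $\{a_{i+1},\dots,a_n\}$. With this notation, $s' = w_0 w_1 \cdots w_k$, and the claim is that $s a_i = w_0 w_1 \cdots w_k\, a_i$.

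If $k=0$ there is nothing to prove. For $k\ge 1$, consider the rightmost block: since $c(w_k)\subseteq\{a_{i+1},\dots,a_n\}$, Lemma~\ref{lem:reduction}(a) applied to $w_k$ gives $a_i w_k a_i = w_k a_i$. Substituting this into $s a_i$ reduces the number of occurrences of $a_i$ by one:
\[
s a_i = w_0\, a_i\, w_1 \cdots a_i\, w_{k-1}\,(a_i\, w_k\, a_i) = w_0\, a_i\, w_1 \cdots a_i\,(w_{k-1} w_k)\, a_i.
\]
Now $w_{k-1} w_k$ is again a word in $\{a_{i+1},\dots,a_n\}^*$, so the right-hand side has the same shape as $s a_i$ but with one fewer occurrence of $a_i$, and we may iterate (or appeal to the induction hypothesis) to arrive at $w_0 w_1 \cdots w_k\, a_i$, as required. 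Part~(b) is proved by the mirror argument, factoring $t$ as $u_0 a_i u_1 \cdots a_i u_k$ with $c(u_j)\subseteq\{a_1,\dots,a_{i-1}\}$ and absorbing each $a_i$ from the left using Lemma~\ref{lem:reduction}(b).

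I do not anticipate a real obstacle here: the statement is essentially a routine iteration of the single-letter absorption already provided by Lemma~\ref{lem:reduction}. The only mild point of care is to verify that the factorization is well-defined at the level of elements of $\mathcal{K}_n$, which is immediate since all defining relations of $\mathcal{K}_n$ preserve content, so choosing any word representative of $s$ and factoring it by the occurrences of $a_i$ is legitimate.
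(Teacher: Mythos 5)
Your proposal is correct and follows essentially the same route as the paper: an induction on the number of occurrences of $a_i$, absorbing the rightmost occurrence via Lemma~\ref{lem:reduction}a (since the block to its right lies in $\{a_{i+1},\dots,a_n\}^*$), with the dual argument via Lemma~\ref{lem:reduction}b for part~(b). The paper phrases this as writing $s=s_1a_is_2$ with $s_2$ free of $a_i$ rather than displaying the full block factorization, but the argument is the same.
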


\begin{proof}
We verify only claim a); the argument for claim b) is symmetric.

We induct on the number of occurrences of $a_i$ in $s$. If $a_i$ does not occur in $s$, nothing is to prove. Suppose that $a_i$ occurs in $s$ and represent $s$ as $s=s_1a_is_2$ where $s_2$ contains no occurrences of $a_i$. Then $sa_i=s_1a_is_2a_i=s_1s_2a_i$ by Lemma~\ref{lem:reduction}a, and the induction hypothesis applies to $s_1s_2a_i$ since $s_1s_2$ contains fewer occurrences of $a_i$.
\end{proof}

Finally, we recall the following property of Hecke--Kiselman monoids.

\begin{proposition}[{\mdseries\cite[Proposition~14]{GM11}}]
\label{prop:epimorphism}
Let $\Theta$ and $\Phi$ be anti-reflexive binary relations on the set $\{1,2,\dots,n\}$ and $\Phi\subseteq\Theta$. The map $a_i\mapsto a_i$ uniquely extends to a homomorphism from the Hecke--Kiselman monoid $\mathcal{HK}_\Theta$ onto the monoid $\mathcal{HK}_\Phi$.
\end{proposition}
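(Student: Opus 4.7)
The plan is to appeal to the universal property of monoid presentations: the assignment $a_i \mapsto a_i$ determines a unique monoid homomorphism from $\mathcal{HK}_\Theta$ into any monoid in which the defining relations of $\mathcal{HK}_\Theta$ hold. So it suffices to verify that, when each $a_i$ is interpreted as the $i$-th generator of $\mathcal{HK}_\Phi$, every defining relation of $\mathcal{HK}_\Theta$ is satisfied in $\mathcal{HK}_\Phi$. Surjectivity of the resulting map is automatic, since the image contains all generators of $\mathcal{HK}_\Phi$.

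The relations $a_i^2=a_i$ belong to the presentation of $\mathcal{HK}_\Phi$ as well, so they require no work. For each unordered pair $\{i,j\}$ with $i\ne j$, the relation imposed by $\Theta$ is of one of three types, according to whether both, exactly one, or neither of the ordered pairs $(i,j),(j,i)$ belong to $\Theta$; similarly for $\Phi$. Because $\Phi\subseteq\Theta$, the $\Phi$-type of a given pair is never ``stronger'' than its $\Theta$-type, and a short case analysis handles all possibilities. For instance, if $(i,j),(j,i)\notin\Theta$, then the same holds for $\Phi$, so $a_ia_j=a_ja_i$ is already a defining relation of $\mathcal{HK}_\Phi$. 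If $(i,j)\notin\Theta$ and $(j,i)\in\Theta$, so that $\mathcal{HK}_\Theta$ imposes $a_ia_ja_i=a_ja_ia_j=a_ja_i$, then either $(j,i)\in\Phi$ (same relation in $\mathcal{HK}_\Phi$) or $(j,i)\notin\Phi$; in the latter case $a_i$ and $a_j$ commute in $\mathcal{HK}_\Phi$, and combined with idempotency both triple products collapse to $a_ja_i$. The remaining case $(i,j),(j,i)\in\Theta$ is handled in the same fashion, the three possible $\Phi$-subcases each yielding $a_ia_ja_i=a_ja_ia_j$ in $\mathcal{HK}_\Phi$.

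There is essentially no obstacle here: the proposition is a standard universal-property argument, whose only content is the finite enumeration above, showing that in $\mathcal{HK}_\Phi$ the $\Theta$-relation on each pair $\{i,j\}$ is a consequence of the $\Phi$-relation on that pair together with idempotency. The only mild care required is to keep the three $\Theta$-types and (for each) the allowed $\Phi$-subtypes straight.
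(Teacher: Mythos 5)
Your argument is correct: the paper itself gives no proof of this statement, quoting it directly from \cite[Proposition~14]{GM11}, and your universal-property argument—checking that each defining relation of $\mathcal{HK}_\Theta$ on a pair $\{i,j\}$ follows in $\mathcal{HK}_\Phi$ from the (weaker) $\Phi$-relation on that pair together with idempotency, with surjectivity immediate since the generators lie in the image—is the standard and complete justification of the cited result.
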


\section{Equational theories of Hecke--Kiselman monoids}
\label{sec:new}

For each positive integer $n$ and each alphabet $X$, consider the following relation $\sim_n$ on the free monoid $X^*$:
\[
w\sim_n w'\ \text{ iff $w$ and $w'$ have the same scattered subwords of length $\le n$.}
\]
It is convenient for us to introduce also the relation $\sim_0$ by which we merely mean the universal relation on $X^*$. We need a few facts from combinatorics on words dealing with the relation $\sim_n$. They all come from Simon's seminal paper~\cite{Si75} and are collected in the following lemma.

\begin{lemma}
\label{lem:simon}
Let $n\ge1$ and $u,v\in X^*$.

\emph{a)} If $u\sim_n v$, then there exists a word $w\in X^*$ that has each of the words $u$ and $v$ as a scattered subword and such that $u\sim_n w\sim_n v$.

\emph{b)} The relation $u\sim_n uw$ holds if and only if there exist $u_1,\dots,u_n\in X^*$ such that $u=u_n\cdots u_1$ and $c(u_n)\supseteq\dots\supseteq c(u_1)\supseteq c(w)$.

\emph{c)} The relation $v\sim_n wv$ holds if and only if there exist $v_1,\dots,v_n\in X^*$ such that $v=v_1\cdots v_n$ and $c(w)\subseteq c(v_1)\subseteq\dots\subseteq c(v_n)$.

\emph{d)} Let $x\in X$. The relation $uxv\sim_n uv$ holds if and only if there exist $k,\ell\ge0$ with $k+\ell\ge n$ and such that $u\sim_k ux$ and $xv\sim_\ell v$.
\end{lemma}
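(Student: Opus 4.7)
The plan is to prove (d) first as the key technical ingredient, derive (b) from it by induction on $n$, observe that (c) is the left-right mirror of (b), and handle (a) by an extremal shuffle argument.

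For (d), I treat the designated occurrence of $x$ in $uxv$ as distinguished throughout. For the forward direction, let $k,\ell\ge 0$ be maximal with $u\sim_k ux$ and $xv\sim_\ell v$; both are finite (bounded by $|u|$ and $|v|$). Maximality yields scattered subwords $\alpha x$ of $ux$ of length $k+1$ and $x\beta$ of $xv$ of length $\ell+1$ that do not occur in $u$ and $v$ respectively. Their concatenation $\alpha x\beta$ is scattered in $uxv$ via the distinguished $x$, but cannot be scattered in $uv$: any embedding into $uv$ would place $x$ either inside the $u$-part (forcing $\alpha x$ into $u$) or inside the $v$-part (forcing $x\beta$ into $v$), each contradicting maximality. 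Hence $uxv\sim_n uv$ forces $k+\ell+1>n$. Conversely, given such $k,\ell$, any length-$\le n$ scattered subword of $uxv$ using the distinguished $x$ writes as $\alpha'x\beta'$ with $|\alpha'|+|\beta'|+1\le n\le k+\ell$; the inequality forces $|\alpha' x|\le k$ or $|x\beta'|\le\ell$, and the corresponding hypothesis reroutes $x$ into $u$ or $v$, placing the subword inside $uv$.

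For (b), I induct on $n$. The base $n=1$ reduces to $c(w)\subseteq c(u)$, witnessed by the trivial block $u_1=u$. For the forward direction at $n\ge 2$, take $u_1$ to be the shortest suffix of $u$ satisfying $c(u_1)\supseteq c(w)$ and write $u=u'u_1$. The crucial reduction is that $u\sim_n uw$ implies $u'\sim_{n-1}u'u_1$: for any length-$\le n-1$ scattered subword $\sigma$ of $u$ and any $z\in c(w)$, the hypothesis $u\sim_n uw$ forces $\sigma z$ to occur in $u$ (since $\sigma z$ is scattered in $uw$), so the leftmost embedding of $\sigma$ in $u$ ends at a position $p$ with $c(u_{>p})\supseteq c(w)$; minimality of $u_1$ then gives $p\le|u'|$, i.e., $\sigma$ embeds in $u'$. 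The induction hypothesis at level $n-1$ applied to $(u',u_1)$ yields blocks $u'=u_n\cdots u_2$ with $c(u_n)\supseteq\cdots\supseteq c(u_2)\supseteq c(u_1)$, which combined with $c(u_1)\supseteq c(w)$ assembles the full chain. The converse direction of (b) is also by induction on $n$: the inductive hypothesis applied to $u'=u_n\cdots u_2$ at level $n-1$, with $u_1 w$ playing the role of $w$, yields $u'\sim_{n-1}uw$, so every length-$\le n-1$ subword of $uw$ is scattered in $u'$ and hence in $u$; for a length-$n$ subword $\sigma=\sigma' y$ of $uw$, the prefix $\sigma'$ embeds in $u'$ ending at some $p\le|u'|$, and the last letter $y$ is then embedded in $u_1$ (which is contained in the suffix $u_{>p}$), using $y\in c(w)\subseteq c(u_1)$ in the non-trivial case $y\in w$. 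Part (c) follows by reversing all words. For (a), choose a word $w$ of minimum length having both $u$ and $v$ as scattered subwords; minimality forces every position of $w$ to participate in at least one of the two embeddings, and $u\sim_n v$ lets one reroute any length-$\le n$ scattered subword of $w$ through the $u$-embedding alone (or through the $v$-embedding alone), giving $w\sim_n u$ and $w\sim_n v$.

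The main obstacle is the reduction $u\sim_n uw\Rightarrow u'\sim_{n-1}u'u_1$ in the inductive step of (b); choosing $u_1$ as the shortest suffix with $c(u_1)\supseteq c(w)$ and invoking the leftmost-embedding trick are both essential. Once this is in hand, the remaining parts assemble routinely.
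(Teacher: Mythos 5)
A preliminary remark on the comparison: the paper does not prove this lemma at all---it is quoted from Simon's paper \cite{Si75}---so your proposal has to be judged on its own terms. Your arguments for parts (b), (c) and (d) are correct: the maximal-$k,\ell$/distinguishing-word argument for (d), the induction on $n$ for (b) with $u_1$ chosen as the shortest suffix whose content contains $c(w)$ together with the leftmost-embedding property (the greedy embedding of $\sigma$ in $u$ ends no later than any other embedding, so every $z\in c(w)$ occurs after its endpoint), and the reversal argument for (c) all go through.

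Part (a), however, contains a genuine gap: the claim that an \emph{arbitrary} minimum-length common superword $w$ of $u$ and $v$ satisfies $w\sim_n u$ is false, so no ``rerouting'' argument can work as stated. Take $n=2$, $u=bbab$, $v=babb$; then $u\sim_2 v$ (each word has exactly $a$, $b$, $ab$, $ba$, $bb$ as scattered subwords of length at most $2$). The word $w=babab$ contains $u$ as a scattered subword (positions $1,3,4,5$) and $v$ as well (positions $1,2,3,5$), and it has minimum length $5$, since a common superword of length $4$ would have to coincide with both $u$ and $v$. Every position of $w$ is indeed used by one of the two embeddings, exactly as your minimality argument predicts. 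Nevertheless $aa$ is a scattered subword of $w$ (positions $2$ and $4$) but of neither $u$ nor $v$, so $w\not\sim_2 u$: this length-$2$ subword uses one position covered only by the $v$-embedding and one covered only by the $u$-embedding, and it cannot be pushed through either embedding alone. Statement (a) itself is true (in the example $bbabb$ is a minimum-length common superword that does work), but proving it requires constructing a suitable $w$ explicitly---for instance by induction on $|u|+|v|$, or via arguments in the spirit of your (b)--(d) as in Simon's original treatment---rather than appealing to minimality of an arbitrary common superword.
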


Recall that $J_n$ stands for the set of all identities $w\bumpeq w'$ such that $w$ and $w'$ have the same scattered subwords of length $\le n$; that is,
\[
J_n=\{w\bumpeq w' \mid w\sim_n w'\}.
\]
Lemma~\ref{lem:simon}a implies that the set
\[
J'_n=\{w\bumpeq w' \mid w\sim_n w'\ \text{and}\ w\ \text{is a scattered subword of}\ w'\}
\]
forms an \ib\ for $J_n$. Now let $w\bumpeq w'$ be an identity in $J'_n$. Since $w$ is  a scattered subword of $w'$, we have $w=x_1\cdots x_m$ and
\[
w'=v_0x_1v_1\cdots v_{m-1}x_mv_m
\]
for some letters $x_1,\dots,x_m$ and some $v_0,v_1,\dots,v_{m-1},v_m\in X^*$. If we represent the word $v_0v_1\cdots v_{m-1}v_m$
as a product of letters,
\[
v_0v_1\cdots v_{m-1}v_m=y_1y_2\cdots y_p\quad \text{with }\ y_1,y_2,\dots,y_p\in X,
\]
we can think of $w'$ as of the result of $p$ successive insertions of the letters $y_1,\dots,y_p$ in the word $w$. If we let $w_0=w$ and denote by $w_i$, $i=1,\dots,p$, the word obtained after the $i$-th insertion, then $w_p=w'$,
and since $w\sim_n w'$, we conclude that $w_{i-1}\sim_n w_i$ for each $i$. Therefore the identities $w_0\bumpeq w_1$, $w_1\bumpeq w_2$, \dots, $w_{p-1}\bumpeq w_p$ all belong to $J'_n$ and, clearly, together they imply the identity $w\bumpeq w'$. Thus, we can substitute the set $J'_n$ by the following smaller \ib\ for $J_n$:
\[
J''_n=\{w\bumpeq w' \mid w\sim_n w'\ \text{and}\ w=uv,\ w'=uxv\ \text{for some}\ u,v\in X^*,\ x\in X\}.
\]

Combining this observation with Lemma~\ref{lem:simon}b--d, we get the following.

\begin{corollary}
\label{cor:uxv}
For each $n\ge1$, the collection of all identities of the form
\begin{equation}
\label{eq:uxv}
u_k\cdots u_1v_1\cdots v_\ell\bumpeq u_k\cdots u_1xv_1\cdots v_\ell,
\end{equation}
where $k,\ell\ge0$, $k+\ell\ge n$, $x\in X$, and
\begin{equation}
\label{eq:inclusions}
c(u_k)\supseteq\dots\supseteq c(u_1)\supseteq\{x\}\subseteq c(v_1)\subseteq\dots\subseteq c(v_\ell)
\end{equation}
forms an \ib\ for the set $J_n$.
\end{corollary}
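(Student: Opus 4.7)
The plan is to leverage the fact (established just above the corollary) that $J''_n$ is an identity basis for $J_n$, and to recognize $J''_n$ as a reformulation of \eqref{eq:uxv}--\eqref{eq:inclusions} via the three characterizations (b)--(d) of Lemma~\ref{lem:simon}. Two directions need to be checked: that every identity of the prescribed form lies in $J_n$, and that every identity in $J''_n$ follows from identities of that form.

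For soundness, I feed the chain $c(u_k) \supseteq \cdots \supseteq c(u_1) \supseteq \{x\}$ into the ``if'' direction of Lemma~\ref{lem:simon}b (taking $w = x$) to obtain $u_k \cdots u_1 \sim_k u_k \cdots u_1 x$; symmetrically, Lemma~\ref{lem:simon}c yields $v_1 \cdots v_\ell \sim_\ell x v_1 \cdots v_\ell$. Since $k + \ell \geq n$, the ``if'' direction of Lemma~\ref{lem:simon}d then assembles these into $u_k \cdots u_1 v_1 \cdots v_\ell \sim_n u_k \cdots u_1 x v_1 \cdots v_\ell$, so \eqref{eq:uxv} indeed lies in $J_n$.

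For completeness, take an arbitrary identity $uv \bumpeq uxv$ in $J''_n$, so that $uv \sim_n uxv$ with $x \in X$. The ``only if'' direction of Lemma~\ref{lem:simon}d supplies $k, \ell \geq 0$ with $k + \ell \geq n$, $u \sim_k ux$, and $xv \sim_\ell v$. When $k \geq 1$, Lemma~\ref{lem:simon}b applied with $w = x$ factors $u = u_k \cdots u_1$ with the required nested contents; when $k = 0$, the corresponding $u$-part of the basis identity is empty, and the concrete prefix $u$ is reintroduced by the standard deduction rule of left multiplication (the analogous situation being the reason the condition $k + \ell \geq n$ is a sum rather than two separate bounds). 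The symmetric argument via Lemma~\ref{lem:simon}c handles $v$.

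The only real subtlety is parameter matching: the very same $k$ and $\ell$ returned by part (d) of Lemma~\ref{lem:simon} must index the factorizations supplied by parts (b) and (c), and the edge cases $k=0$ or $\ell=0$ must be reconciled with the convention that an empty product denotes the empty word. Beyond these bookkeeping points, the corollary is a direct amalgamation of Lemma~\ref{lem:simon} with the basis $J''_n$ already constructed in the excerpt, and I anticipate no deeper technical obstacle.
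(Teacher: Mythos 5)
Your argument is correct and is essentially the paper's own proof: the paper obtains the corollary precisely by combining the basis $J''_n$ constructed just beforehand with Lemma~\ref{lem:simon}b--d, exactly as you do. Your explicit handling of the $k=0$ (resp.\ $\ell=0$) edge case, where the concrete prefix $u$ (resp.\ suffix $v$) is restored by left (resp.\ right) multiplication, spells out a detail the paper leaves implicit, but the approach is the same.
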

Let us comment on the meaning of the formulas~\eqref{eq:uxv} and~\eqref{eq:inclusions} in the extreme situations
$k=0$ or $\ell=0$. If $k=0$, the identity~\eqref{eq:uxv} becomes
$v_1\cdots v_\ell\bumpeq xv_1\cdots v_\ell$, while the inclusions~\eqref{eq:inclusions} reduce to
$\{x\}\subseteq c(v_1)\subseteq\dots\subseteq c(v_\ell)$; dually, if $\ell=0$, then~\eqref{eq:uxv} and~\eqref{eq:inclusions} become respectively $u_k\cdots u_1\bumpeq u_k\cdots u_1x$ and $c(u_k)\supseteq\dots\supseteq c(u_1)\supseteq\{x\}$.

Fix an integer $n\ge 2$. Recall that
\begin{gather*}
\Theta_K=\{(j,i)\mid 1\le i < j \le n\},\\
\Theta_C=\{(i+1,i)\mid i=1,2,\dots,n-1\}.
\end{gather*}
Our main result describes the identities holding in the Hecke--Kiselman monoid $\mathcal{HK}_\Theta$ for every relation $\Theta$ situated between $\Theta_K$ and $\Theta_C$.

\begin{theorem}
\label{thm:identities of HKn}
Let $n\ge 2$. Then for every relation $\Theta$ on the set $\{1,2,\dots,n\}$ such that $\Theta_C\subseteq\Theta\subseteq\Theta_K$, the set of identities of the Hecke--Kiselman monoid $\mathcal{HK}_\Theta$ coincides with $J_n=\{w\bumpeq w' \mid w\sim_n w'\}$.
\end{theorem}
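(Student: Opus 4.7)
The plan is to sandwich a combinatorial core between two applications of Proposition~\ref{prop:epimorphism}. Since $\Theta_C\subseteq\Theta\subseteq\Theta_K$, that proposition supplies a chain of surjective homomorphisms $\mathcal{K}_n\twoheadrightarrow\mathcal{HK}_\Theta\twoheadrightarrow\mathcal{C}_{n+1}$. Identities transfer along surjections, so $\text{Id}(\mathcal{K}_n)\subseteq\text{Id}(\mathcal{HK}_\Theta)\subseteq\text{Id}(\mathcal{C}_{n+1})=J_n$, the last equality being Proposition~\ref{prop:identities of Cn}. Thus the theorem collapses to the single claim that $\mathcal{K}_n$ itself satisfies every identity of $J_n$.

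For the latter, I would appeal to Corollary~\ref{cor:uxv} and verify only the insertion identities~\eqref{eq:uxv} subject to the content chain~\eqref{eq:inclusions} and $k+\ell\ge n$. Fix such an identity and a substitution $\varphi\colon X^*\to\mathcal{K}_n$. Writing $\varphi(x)$ as a product of generators $a_{i_1}\cdots a_{i_m}$ and inserting these letters in the middle one at a time---each single-letter insertion preserving the chain, because $\{a_{i_t}\}\subseteq c(\varphi(x))\subseteq c(\varphi(v_1))$ and symmetrically---reduces the task to the case $\varphi(x)=a_i$ for a single generator. Setting $U_j:=\varphi(u_j)$ and $V_j:=\varphi(v_j)$, and using that content is well defined on elements of $\mathcal{K}_n$ (observed right after~\eqref{eq:kn}), the chain on the $u_j,v_j$ propagates to $c(U_k)\supseteq\cdots\supseteq c(U_1)\supseteq\{a_i\}\subseteq c(V_1)\subseteq\cdots\subseteq c(V_\ell)$, so the remaining task is to establish
\[
U_k\cdots U_1 V_1\cdots V_\ell \;=\; U_k\cdots U_1\, a_i\, V_1\cdots V_\ell \quad \text{in } \mathcal{K}_n.
\]

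For this central equality I plan an induction on the complexity parameter $T:=\sum_j|c(U_j)\setminus\{a_i\}|+\sum_j|c(V_j)\setminus\{a_i\}|$. When $T=0$ every factor is a power of $a_i$ and both sides collapse to $a_i$ by $a_i^2=a_i$. In the inductive step I would peel off an extraneous letter $a_j\neq a_i$ using Lemma~\ref{lem:reduction1}: part~(a) excises letters of index greater than $i$ from a segment whose content avoids indices below $i$, and part~(b) handles the dual situation. The Kiselman anti-automorphism $a_s\mapsto a_{n+1-s}$ composed with word reversal (which one can verify respects the defining relations~\eqref{eq:kn}) will halve the casework by interchanging the left and right halves of the equation together with the index ranges $\{a_1,\dots,a_{i-1}\}$ and $\{a_{i+1},\dots,a_n\}$. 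The hardest part will be handling factors $U_j$ or $V_j$ whose contents simultaneously contain indices below and above $i$, so that neither clause of Lemma~\ref{lem:reduction1} fires directly; there I plan to use the content chain and the precise bound $k+\ell\ge n$ to split the offending factor at an occurrence of $a_i$ and redistribute its ``above-$i$'' and ``below-$i$'' parts into neighbouring factors whose contents already subsume them, keeping the chain intact while strictly decreasing $T$. Pinning down why $k+\ell\ge n$ is exactly what guarantees the availability of such a redistribution is the delicate heart of the argument.
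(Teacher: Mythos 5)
Your first half coincides with the paper's: the sandwich $\mathcal{K}_n\twoheadrightarrow\mathcal{HK}_\Theta\twoheadrightarrow\mathcal{C}_{n+1}$ from Proposition~\ref{prop:epimorphism} together with Proposition~\ref{prop:identities of Cn}, and the reduction via Corollary~\ref{cor:uxv}, are exactly how the paper collapses the theorem to the single claim that $\mathcal{K}_n$ satisfies every identity \eqref{eq:uxv} with \eqref{eq:inclusions} and $k+\ell\ge n$. But that claim is the whole content of the theorem, and your proposal does not actually prove it: you yourself flag that handling factors whose content meets both $\{a_1,\dots,a_{i-1}\}$ and $\{a_{i+1},\dots,a_n\}$, and explaining how the bound $k+\ell\ge n$ enters, is ``the delicate heart'' left to an unspecified ``redistribution''. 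That is a genuine gap, not a detail: those mixed-content factors are the generic case, Lemma~\ref{lem:reduction1} gives no direct handle on them (note also that part a) removes duplicate occurrences of the \emph{smallest}-index letter present in the segment, not ``letters of index greater than $i$''), and nothing in your sketch shows why a redistribution preserving the chain and decreasing $T$ exists, nor where $k+\ell\ge n$ is consumed. The preliminary reduction to $\varphi(x)=a_i$ a single generator is valid (group the already-inserted letters with $U_1$), but it buys nothing, since the difficulty sits in the factors $U_j,V_j$, not in $\bar x$.

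The paper's mechanism is different from, and simpler than, what you outline, and it never needs to shrink the factors down to powers of $a_i$ (your $T=0$ target) nor to split or redistribute anything. One cleans in a fixed order from the two ends inward: using an occurrence of $a_1$ in $\bar v_\ell$ (or its absence everywhere inside, guaranteed by the content chain), Lemma~\ref{lem:reduction1}a deletes all $a_1$'s from $\bar x,\bar v_1,\dots,\bar v_{\ell-1}$; this makes the hypothesis of the lemma available for the next step, which uses $\bar v_{\ell-1}$ to delete $a_2$ from the remaining inner factors, and so on, so that after $\ell$ steps the inserted element has become $\bar x^{[\ell+1,n]}$; the dual procedure with Lemma~\ref{lem:reduction1}b uses $\bar u_k,\bar u_{k-1},\dots$ to delete $a_n,a_{n-1},\dots$, turning it into $\bar x^{[\ell+1,n-k]}$. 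Each of the $\ell$ right factors neutralizes one low index and each of the $k$ left factors one high index, so $k+\ell\ge n$ enters in a completely transparent way: $\ell+1>n-k$, hence $\bar x^{[\ell+1,n-k]}=1$ and the two sides are equal, with the outer factors $\bar u_k$ and $\bar v_\ell$ untouched throughout. If you want to salvage your plan, replace the induction on $T$ and the redistribution idea by this ordered two-sided cleaning; as it stands, the central equality is asserted rather than proved.
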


\begin{proof}
Let $\Sigma_\Theta$ stand for the set of identities holding in the monoid $\mathcal{HK}_\Theta$. By Proposition~\ref{prop:epimorphism} the Catalan monoid $\mathcal{C}_{n+1}=\mathcal{HK}_{\Theta_C}$ is a homomorphic image of $\mathcal{HK}_\Theta$ whence every identity of $\Sigma_\Theta$ holds in $\mathcal{C}_{n+1}$. By Proposition~\ref{prop:identities of Cn} this means that $\Sigma_\Theta\subseteq J_n$. On the other hand, by Proposition~\ref{prop:epimorphism} the monoid $\mathcal{HK}_\Theta$ is a homomorphic image of the Kiselman monoid $\mathcal{K}_n=\mathcal{HK}_{\Theta_K}$ whence $\mathcal{HK}_\Theta$ satisfies all identities of $\mathcal{K}_n$.
Therefore, in order to prove that $J_n\subseteq\Sigma_\Theta$, it suffices to check that every identity in the set $J_n$ holds in $\mathcal{K}_n$. Corollary~\ref{cor:uxv} reduces the latter task to verifying that $\mathcal{K}_n$ satisfies each identity of the form~\eqref{eq:uxv} obeying the conditions~\eqref{eq:inclusions}.

Thus, we fix an arbitrary identity of the form~\eqref{eq:uxv} satisfying~\eqref{eq:inclusions} and an arbitrary homomorphism $\varphi\colon X^+\to\mathcal{K}_n$. Given a word $w\in X^+$, we write $\bar{w}$ instead of $w\varphi$ for the image $w$ under $\varphi$, just to lighten notation. Let
\[
b=\bar{u}_k\cdots \bar{u}_1\bar{v}_1\cdots\bar{v}_\ell,\quad
d=\bar{u}_k\cdots \bar{u}_1\bar{x}\bar{v}_1\cdots\bar{v}_\ell.
\]
We aim to prove that $b=d$.

Recall that we represent elements of $\mathcal{K}_n$ as words over the alphabet $A_n=\{a_1,a_2,\dots,a_n\}$, and it makes sense to speak about occurrences of a letter in an element $s\in\mathcal{K}_n$ since all words representing $s$ have the same content. If $p,q\in\{1,\dots,n\}$, we denote by $s^{[p,q]}$ the element of $\mathcal{K}_n$ obtained from $s$ by removing all occurrences of the letters $a_i$ such that either $i<p$ or $q<i$ if there were some. If no letter $a_i$ with either $i<p$ or $q<i$ occurs in $s$, we let $s^{[p,q]}=s$. Observe that, by this definition, $s^{[p,q]}=1$ whenever $p>q$.

Clearly, the inclusions~\eqref{eq:inclusions} imply that
\begin{equation}
\label{eq:inclusions1}
c(\bar{u}_k)\supseteq\dots\supseteq c(\bar{u}_1)\supseteq c(\bar{x})\subseteq c(\bar{v}_1)\subseteq\dots\subseteq c(\bar{v}_\ell).
\end{equation}

Suppose that $\ell>0$ and $a_1\in c(\bar{v}_\ell)$. Then we can apply Lemma~\ref{lem:reduction1}a to remove all occurrences of $a_1$ from each of the factors $\bar{v}_1,\dots,\bar{v}_{\ell-1}$ of $b$ and from each of the factors $\bar{x},\bar{v}_1,\dots,\bar{v}_{\ell-1}$ of $d$ without changing $b$ nor $d$. Using the notation introduced above, we get the equalities
\begin{align}
b&=\bar{u}_k\cdots\bar{u}_1\bar{v}_1^{[2,n]}\cdots\bar{v}_{\ell-1}^{[2,n]}\bar{v}_{\ell},\label{eq:1st step b}\\
d&=\bar{u}_k\cdots\bar{u}_1\bar{x}^{[2,n]}\bar{v}_1^{[2,n]}\cdots\bar{v}_{\ell-1}^{[2,n]}\bar{v}_{\ell}.
\label{eq:1st step d}
\end{align}
However, the inclusions~\eqref{eq:inclusions1} ensure that the equalities~\eqref{eq:1st step b} and~\eqref{eq:1st step d} hold true also in the case when $a_1\notin c(\bar{v}_\ell)$: if the letter $a_1$ does not occur in $\bar{v}_\ell$, it occurs in none of the factors $\bar{x},\bar{v}_1,\dots,\bar{v}_{\ell-1}$, whence $\bar{x}=\bar{x}^{[2,n]}$,
$\bar{v}_1=\bar{v}_1^{[2,n]}$, \dots, $\bar{v}_{\ell-1}=\bar{v}_{\ell-1}^{[2,n]}$. Thus, the equalities~\eqref{eq:1st step b} and~\eqref{eq:1st step d} hold whenever $\ell>0$.

Suppose that $\ell>1$ and $a_2\in c(\bar{v}_{\ell-1})$. Then we can apply Lemma~\ref{lem:reduction1}a to remove all occurrences of $a_2$ from each of the factors $\bar{v}_1^{[2,n]},\dots,\bar{v}_{\ell-1}^{[2,n]}$ in the representation~\eqref{eq:1st step b} of $b$ and from each of the factors $\bar{x}^{[2,n]},\bar{v}_1^{[2,n]},\dots,\bar{v}_{\ell-1}^{[2,n]}$ in the representation~\eqref{eq:1st step d} of $d$  so that neither $b$ nor $d$ will change. Thus, we get
\begin{align}
b&=\bar{u}_k\cdots\bar{u}_1\bar{v}_1^{[3,n]}\cdots\bar{v}_{\ell-2}^{[3,n]}\bar{v}_{\ell-1}^{[2,n]}\bar{v}_{\ell},
\label{eq:2nd step b}\\
d&=\bar{u}_k\cdots\bar{u}_1\bar{x}^{[3,n]}\bar{v}_1^{[3,n]}\cdots\bar{v}_{\ell-2}^{[3,n]}\bar{v}_{\ell-1}^{[2,n]}
\bar{v}_{\ell}.\label{eq:2nd step d}
\end{align}
Again, the inclusions~\eqref{eq:inclusions1} imply that the equalities~\eqref{eq:2nd step b} and~\eqref{eq:2nd step d} hold also in the case when $a_2\notin c(\bar{v}_{\ell-1})$.

Applying this argument $\ell-1$ times to $b$ and $\ell$ times to $d$, we eventually arrive at the equalities
\begin{align}
b&=\bar{u}_k\cdots\bar{u}_1\bar{v}_1^{[\ell,n]}\cdots\bar{v}_{\ell-2}^{[3,n]}\bar{v}_{\ell-1}^{[2,n]}\bar{v}_{\ell},
\label{eq:ell-th step b}\\
d&=\bar{u}_k\cdots\bar{u}_1\bar{x}^{[\ell+1,n]}\bar{v}_1^{[\ell,n]}\cdots\bar{v}_{\ell-2}^{[3,n]}\bar{v}_{\ell-1}^{[2,n]}
\bar{v}_{\ell}.\label{eq:ell-th step d}
\end{align}
Now we can apply the symmetric argument on the left. If $k>0$ and $a_n\in c(\bar{u}_k)$, we use Lemma~\ref{lem:reduction1}b to remove all occurrences of $a_n$ from each of the factors $\bar{u}_k,\dots,\bar{u}_1$ in the representation~\eqref{eq:ell-th step b} of $b$ and from each of the factors  $\bar{u}_k,\dots,\bar{u}_1,\bar{x}^{[\ell+1,n]}$ in the representation~\eqref{eq:ell-th step d} of $d$ with no effect on the value of $b$ or $d$. This leads to the equalities
\begin{align*}
b&=\bar{u}_k\bar{u}_{k-1}^{[1,n-1]}\cdots\bar{u}_1^{[1,n-1]}\bar{v}_1^{[\ell,n]}\cdots
\bar{v}_{\ell-2}^{[3,n]}\bar{v}_{\ell-1}^{[2,n]}\bar{v}_{\ell},\\
d&=\bar{u}_k\bar{u}_{k-1}^{[1,n-1]}\cdots\bar{u}_1^{[1,n-1]}\bar{x}^{[\ell+1,n-1]}\bar{v}_1^{[\ell,n]}\cdots
\bar{v}_{\ell-2}^{[3,n]}\bar{v}_{\ell-1}^{[2,n]}\bar{v}_{\ell},
\end{align*}
that hold also in the case when $a_n\notin c(\bar{u}_k)$. Applying the ``left'' argument $k-1$ times to $b$ and $k$ times to $d$, we finally arrive at the following equalities:
\begin{align}
b&=\bar{u}_k\bar{u}_{k-1}^{[1,n-1]}\bar{u}_{k-2}^{[1,n-2]}\cdots\bar{u}_1^{[1,n-k+1]}\bar{v}_1^{[\ell,n]}\cdots
\bar{v}_{\ell-2}^{[3,n]}\bar{v}_{\ell-1}^{[2,n]}\bar{v}_{\ell},\label{eq:(ell+k)-th step b}\\
d&=\bar{u}_k\bar{u}_{k-1}^{[1,n-1]}\bar{u}_{k-2}^{[1,n-2]}\cdots\bar{u}_1^{[1,n-k+1]}\bar{x}^{[\ell+1,n-k]}
\bar{v}_1^{[\ell,n]}\cdots\bar{v}_{\ell-2}^{[3,n]}\bar{v}_{\ell-1}^{[2,n]}\bar{v}_{\ell}.\label{eq:(ell+k)-th step d}
\end{align}
Since $k+\ell\ge n$, we have $\ell+1>n-k$ whence $\bar{x}^{[\ell+1,n-k]}=1$. Thus, from the representations \eqref{eq:(ell+k)-th step b} and \eqref{eq:(ell+k)-th step d} we conclude that $b=d$, as required.
\end{proof}

Combining Theorem~\ref{thm:identities of HKn}, Proposition~\ref{prop:identities of Cn}, and Theorem~\ref{thm:fbp for Cn}, we immediately obtain a solution to the \fbp\ for the Hecke--Kiselman monoids $\mathcal{HK}_\Theta$ with $\Theta$ satisfying $\Theta_C\subseteq\Theta\subseteq\Theta_K$.

\begin{corollary}
\label{cor:fbp for HKn} Let $n\ge 2$ and let $\Theta$ be a binary relation on the set $\{1,2,\dots,n\}$ satisfying $\Theta_C\subseteq\Theta\subseteq\Theta_K$.

\emph{a)} If $n=2$ or $n=3$, then the monoid $\mathcal{HK}_\Theta$ is \fb\ and has the identities~\eqref{eq:basis c3} or respectively~\eqref{eq:basis c4} as an \ib.

\emph{b)} If $n\ge 4$, then the monoid $\mathcal{HK}_\Theta$ is \nfb.
\end{corollary}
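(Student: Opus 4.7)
The plan is to assemble the corollary directly from Theorem~\ref{thm:identities of HKn}, Proposition~\ref{prop:identities of Cn}, and Theorem~\ref{thm:fbp for Cn}. First I would combine the first two: Theorem~\ref{thm:identities of HKn} identifies the equational theory of $\mathcal{HK}_\Theta$ with $J_n$, and by Proposition~\ref{prop:identities of Cn} this same set $J_n$ is the equational theory of the Catalan monoid $\mathcal{C}_{n+1}$. Therefore $\mathcal{HK}_\Theta$ and $\mathcal{C}_{n+1}$ satisfy exactly the same semigroup identities.

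Next I would invoke the elementary fact that two \sgps\ with coinciding equational theories share the same identity bases, and in particular are simultaneously \fb\ or \nfb. This reduces both parts of the corollary to the corresponding statements for $\mathcal{C}_{n+1}$, which are provided by Theorem~\ref{thm:fbp for Cn}.

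For part (a), the cases $n=2$ and $n=3$ correspond to $\mathcal{C}_3$ and $\mathcal{C}_4$, whose finite identity bases \eqref{eq:basis c3} and \eqref{eq:basis c4} are supplied by parts (a) and (b) of Theorem~\ref{thm:fbp for Cn}; by the previous paragraph these bases transfer verbatim to $\mathcal{HK}_\Theta$. For part (b), $n\ge 4$ forces $n+1\ge 5$, so Theorem~\ref{thm:fbp for Cn}c tells us that $\mathcal{C}_{n+1}$ is \nfb, and the same conclusion follows for $\mathcal{HK}_\Theta$.

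No genuine obstacle is anticipated: the substantive work lies in Theorem~\ref{thm:identities of HKn}, after which the corollary is essentially bookkeeping. The only point worth flagging is the invariance of the finite basis property (and of the full collection of identity bases) under equality of equational theories, which is immediate from the definition of ``\ib''.
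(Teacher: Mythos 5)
Your proposal is correct and matches the paper's own argument, which likewise obtains the corollary by combining Theorem~\ref{thm:identities of HKn}, Proposition~\ref{prop:identities of Cn}, and Theorem~\ref{thm:fbp for Cn} via the observation that $\mathcal{HK}_\Theta$ and $\mathcal{C}_{n+1}$ have the same equational theory $J_n$ and hence the same identity bases. The paper states this combination in one line, so your slightly more explicit bookkeeping is exactly the intended proof.
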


Specializing Corollary~\ref{cor:fbp for HKn} for $\Theta=\Theta_K$, we get a complete solution to the \fbp\ for the Kiselman monoids $\mathcal{K}_n$.

\begin{corollary}
\label{cor:fbp for Kn}
The monoids $\mathcal{K}_2$ and $\mathcal{K}_3$ are \fb\ while the monoids $\mathcal{K}_n$ with $n\ge 4$ are \nfb.
\end{corollary}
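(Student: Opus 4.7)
The plan is to observe that the statement is a direct specialization of Corollary~\ref{cor:fbp for HKn} at $\Theta = \Theta_K$. By the defining presentation \eqref{eq:kn}, the Kiselman monoid $\mathcal{K}_n$ coincides with the Hecke--Kiselman monoid $\mathcal{HK}_{\Theta_K}$, so the only thing to verify before invoking the corollary is that the choice $\Theta = \Theta_K$ satisfies the sandwich condition $\Theta_C \subseteq \Theta \subseteq \Theta_K$. The upper inclusion is an equality, and the lower inclusion $\Theta_C \subseteq \Theta_K$ is immediate because each cover $(i+1,i)$ of the strict descending order on $\{1,\dots,n\}$ is itself a pair in that order.

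Having checked applicability, I would simply read off the two cases. Part~(a) of Corollary~\ref{cor:fbp for HKn} gives that $\mathcal{K}_2$ and $\mathcal{K}_3$ are finitely based, with \eqref{eq:basis c3} providing an identity basis for $\mathcal{K}_2$ and \eqref{eq:basis c4} one for $\mathcal{K}_3$. (For $n = 2$ this is unsurprising, since $\Theta_K = \Theta_C$ forces $\mathcal{K}_2 = \mathcal{C}_3$; for $n = 3$ the monoids $\mathcal{K}_3$ and $\mathcal{C}_4$ are not isomorphic, but Theorem~\ref{thm:identities of HKn} and Proposition~\ref{prop:identities of Cn} guarantee that they share the same equational theory $J_3$, which is the only information relevant to the finite basis property.) Part~(b) then covers $n \ge 4$ and returns the desired nonfinitely-basedness.

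There is no substantive obstacle at this step: every ingredient has already been assembled in Corollary~\ref{cor:fbp for HKn}, which in turn rests on Theorem~\ref{thm:identities of HKn} together with the Catalan-monoid results of Proposition~\ref{prop:identities of Cn} and Theorem~\ref{thm:fbp for Cn}. The present corollary is therefore a one-line specialization, and no further argument is needed.
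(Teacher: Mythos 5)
Your proposal is correct and matches the paper exactly: the paper derives this corollary precisely by specializing Corollary~\ref{cor:fbp for HKn} to $\Theta=\Theta_K$, with the sandwich condition $\Theta_C\subseteq\Theta_K\subseteq\Theta_K$ being immediate. Nothing further is needed.
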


\begin{remark}
Even though, for the sake of completeness, we have included the case $n=2$ in the formulations of Theorem~\ref{thm:identities of HKn} and Corollaries~\ref{cor:fbp for HKn} and~\ref{cor:fbp for Kn}, none of these results are new for the special case. Indeed, on the set $\{1,2\}$, the relations $\Theta_C$ and $\Theta_K$ coincide, whence the Kiselman monoid $\mathcal{K}_2$ is nothing but the Catalan monoid $\mathcal{C}_3$ and so are all the Hecke--Kiselman monoids $\mathcal{HK}_\Theta$ such that $\Theta_C\subseteq\Theta\subseteq\Theta_K$.
\end{remark}

Finally, we present an example demonstrating that the above description of the equational theory of the Kiselman monoids may also be used to obtain information about their structural properties. The following result was announced in~\cite{Go03} while its first proof appeared in~\cite[Theorem 22]{KM09}; this proof uses a specific representation of $\mathcal{K}_n$ by integer $n\times n$-matrices and involves a clever estimation of an ad hoc numerical parameter of such matrices. In contrast, our proof is rather straightforward and works for all Hecke--Kiselman monoids $\mathcal{HK}_\Theta$ with $\Theta\subseteq\Theta_K$.

\begin{corollary}
\label{cor:Kn is J-trivial}
The Kiselman monoids $\mathcal{K}_n$ are $\mathrsfs{J}$-trivial.
\end{corollary}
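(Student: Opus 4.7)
The plan is to read off $\mathrsfs{J}$-triviality directly from the equational description of $\mathcal{K}_n$ provided by Theorem~\ref{thm:identities of HKn}. The key observation is that the scattered-subword relation $\sim_n$ is transparently monotone under multiplication: adjoining letters on either side of a word can only enlarge its set of scattered subwords.

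First, I would assume that $\mathcal{K}_n a \mathcal{K}_n = \mathcal{K}_n b \mathcal{K}_n$ for some $a,b \in \mathcal{K}_n$, so that there exist $p, q, r, s \in \mathcal{K}_n$ with $b = paq$ and $a = rbs$ (using that $1 \in \mathcal{K}_n$ as a monoid). Choosing word representatives $\alpha, \beta, \pi, \kappa, \rho, \sigma \in A_n^*$ of these elements, the identities $\beta \bumpeq \pi\alpha\kappa$ and $\alpha \bumpeq \rho\beta\sigma$ both hold in $\mathcal{K}_n$, and therefore by Theorem~\ref{thm:identities of HKn} we have $\beta \sim_n \pi\alpha\kappa$ and $\alpha \sim_n \rho\beta\sigma$.

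The remaining step is the content of the argument: every scattered subword of $\alpha$ is visibly a scattered subword of $\pi\alpha\kappa$, hence, via $\sim_n$, of $\beta$; symmetrically, every scattered subword of $\beta$ is a scattered subword of $\alpha$. Restricting to length at most $n$ gives $\alpha \sim_n \beta$, so that the identity $\alpha \bumpeq \beta$ lies in $J_n$ and, by Theorem~\ref{thm:identities of HKn}, holds in $\mathcal{K}_n$; whence $a = b$.

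I do not expect a serious obstacle: once the equational theory of $\mathcal{K}_n$ has been identified with $J_n$, the deduction of $\mathrsfs{J}$-triviality is essentially a tautology about scattered subwords. The same argument applies verbatim to every $\mathcal{HK}_\Theta$ with $\Theta \subseteq \Theta_K$, since by Proposition~\ref{prop:epimorphism} such a monoid is a homomorphic image of $\mathcal{K}_n$ and therefore inherits the identities of $J_n$, which is all the proof actually uses.
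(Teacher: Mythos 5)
There is a genuine gap, and it sits exactly at the step you describe as routine. From $b=paq$ and $a=rbs$ in $\mathcal{K}_n$ you conclude that ``the identities $\beta\bumpeq\pi\alpha\kappa$ and $\alpha\bumpeq\rho\beta\sigma$ hold in $\mathcal{K}_n$.'' This conflates a relation satisfied by particular elements (the values of specific words under one evaluation $A_n^*\to\mathcal{K}_n$) with an identity, which must hold under \emph{every} homomorphism $X^+\to\mathcal{K}_n$; Theorem~\ref{thm:identities of HKn} speaks only about identities. The inference you actually need, namely $\beta\sim_n\pi\alpha\kappa$, is false in general: the defining relation $a_1^2=a_1$ means that the words $a_1a_1$ and $a_1$ represent the same element of $\mathcal{K}_n$, yet they are not $\sim_n$-equivalent for $n\ge2$ (one has the scattered subword $a_1a_1$, the other does not); correspondingly $x\bumpeq x^2$ is not in $J_n$ and hence, by the very theorem you invoke, is not an identity of $\mathcal{K}_n$. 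So equality in $\mathcal{K}_n$ gives no control over scattered subwords of representing words, the chain of inclusions never gets started, and your last step (from $\alpha\sim_n\beta$ to $a=b$, which \emph{is} the valid direction of the theorem) has nothing to feed on. In short, ``equational theory $=J_n$'' does not make $\mathrsfs{J}$-triviality a tautology.

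The theorem can be used, but in the opposite direction: one first exhibits genuine members of $J_n$, for instance $(xy)^n\bumpeq(xy)^nx$ and $x(yx)^n\bumpeq(yx)^n$, which are $\sim_n$-equivalences by Lemma~\ref{lem:simon}b,c and hence identities of $\mathcal{K}_n$ by Theorem~\ref{thm:identities of HKn}, and then substitutes arbitrary elements into them. Concretely, from $a=qbr$ and $b=sat$ one gets $a=(qs)a(tr)$ and, iterating, $a=(qs)^na(tr)^n$; the two identities then give $a=(qs)^na(tr)^n=s(qs)^na(tr)^nt=sat=b$. The substitution trick $a=(qs)^na(tr)^n$, together with the passage from word identities to arbitrary elements (rather than from generator relations to word identities), is precisely what is missing from your proposal; with it, the argument does indeed extend to every $\mathcal{HK}_\Theta$ with $\Theta\subseteq\Theta_K$, since such a monoid is a homomorphic image of $\mathcal{K}_n$ by Proposition~\ref{prop:epimorphism} and so satisfies the same two identities.
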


\begin{proof*}
Lemma~\ref{lem:simon}b,c implies that $(xy)^n\sim_n(xy)^nx$ and $x(yx)^n\sim_n(yx)^n$. By Theorem~\ref{thm:identities of HKn}, the identities $(xy)^n\bumpeq(xy)^nx$ and $x(yx)^n\bumpeq(yx)^n$ hold in the monoid $\mathcal{K}_n$. It is well known that every monoid $M$ satisfying these two identities is $\mathrsfs{J}$-trivial but, for completeness, we reproduce an elementary proof of this fact.

Let $a,b\in M$ and $MaM=MbM$. Then $a=qbr$ and $b=sat$ for some $q,r,s,t\in M$. Substituting the second equality into the first one, we get $a=qsatr$ whence $a=(qs)^na(tr)^n$. Since $M$ satisfies $(xy)^n\bumpeq(xy)^nx$ and $x(yx)^n\bumpeq(yx)^n$, we have
\[
a=(qs)^na(tr)^n=s(qs)^na(tr)^nt=sat=b.\eqno{\qed}
\]
\end{proof*}

Similar syntactic arguments can be used to reprove some other structural results in~\cite{KM09}.

\section{Further results and open questions}
\label{sec:open problems}

\subsection{The \fbp\ for monoids $G(E)$}
\label{subsec:G(E)}
Corollary~\ref{cor:fbp for Kn} shows, in particular, that the ``classic'' Kiselman monoid $\mathcal{K}_3$ originated in \cite{Ki02} is \fb. Recall that $\mathcal{K}_3$ has 18 elements and arises as the monoid of the form $G(E)$ where the underlying space $E$ is a normed space of infinite dimension. If $E$ is finite-dimensional, then the order of the monoid $G(E)$ is always less than 18 and the value of the order varies, depending on the two parameters: the dimension of $E$ and the dimension of $\overline{\{0\}}$, the closure of the origin ($E$ is not assumed to be Hausdorff so that singletons need not be closed). Moreover, it follows from \cite[Theorem~4.1]{Ki02} that each monoid of the form $G(E)$ for finite-dimensional $E$ is isomorphic to the Rees quotient of $\mathcal{K}_3$ over a certain non-singleton ideal. We present a classification of the monoids in Table~\ref{tb:classification}, labelling them as in \cite[Theorem~4.1]{Ki02}.

\extrarowheight=4pt
\begin{center}
\begin{table}[ht]
\caption{The monoids $G(E)$ for finite-dimensional $E$}
\label{tb:classification}
\begin{tabular}{|c|p{2.9cm}|p{1cm}|c|}
\hline
\raisebox{-14pt}{Label} & Relation between $n{\,=\,}\dim E$ and\phantom{bl} $k=\dim\overline{\{0\}}$ & Order of $G(E)$ & \raisebox{-14pt}{The ideal $I$ with $\mathcal{K}_3/I\cong G(E)$}\\
\hline
A$_1$ & $n=0$ & \hfill 1  & $\mathcal{K}_3$\\
\hline
A$_{15}$ & $n=1$, $k=0$ & \hfill 15 & $\{a_2a_3a_1,a_2a_3a_1a_2,a_3a_1a_2,a_3a_2a_1\}$\\
\hline
A$_{16}$ & $n\ge2$, $k=0$ & \hfill 16 & $\{a_2a_3a_1,a_2a_3a_1a_2,a_3a_2a_1\}$\\
\hline
B$_6$ & $n=k>0$ & \hfill 6 & $\mathcal{K}_3a_2\mathcal{K}_3$\\
\hline
B$_{16}$ & $n-1=k>0$ & \hfill 16 & $\{a_2a_3a_1a_2,a_3a_1a_2,a_3a_2a_1\}$\\
\hline
B$_{17}$ & $n-2\ge k>0$ & \hfill 17 & $\{a_2a_3a_1a_2,a_3a_2a_1\}$\\
\hline
\end{tabular}
\end{table}
\end{center}

It may be worth commenting on the topology of $E$ in each of the non-trivial cases in Table~\ref{tb:classification}. In case A$_{15}$ the space $E$ is nothing but the real line $\mathbb{R}$ with the usual topology while case A$_{16}$ corresponds to $E=\mathbb{R}^n$, $n\ge2$, again with the usual topology. Case B$_6$ arises when $E$ is nonzero and equipped with the so-called \emph{chaotic} topology, i.e., the topology such that the only neighborhood of the origin is the whole space. In the remaining two cases (B$_{16}$ and B$_{17}$), one has a mixture between the usual and chaotic topologies: the space $E$ is isomorphic to the product of a $k$-dimensional chaotic space and the space $\mathbb{R}^{n-k}$ with the usual topology.

Being homomorphic images of $\mathcal{K}_3$, the monoids $G(E)$ satisfy the identities~\eqref{eq:basis c4} but it is known that these identities are not strong enough to ensure the finite basis property in every \fs\ satisfying them. Indeed, Lee (unpublished) has observed that the identities~\eqref{eq:basis c4} hold in the 6-element \sgp\ $\mathcal{L}$ given in the class of \sgps\ with 0 by the presentation
\begin{equation}
\label{eq:L}
\mathcal{L}=\langle e,f\mid e^2=e,\ f^2=f,\ efe=0\rangle,
\end{equation}
while the third-named author of the present paper and Luo~\cite{ZL11} have proved that this \sgp\ is \nfb. Nevertheless, here we show that each monoid of the form $G(E)$ with finite-dimensional $E$ is \fb.

Clearly, the 1-element monoid in case A$_1$ is \fb. As for the monoids in the five remaining rows of Table~\ref{tb:classification}, the \fbp\ for each of them is solved by the following somewhat surprising observation.

\begin{theorem}
\label{thm:G(E)}
Each of the monoids of the form $G(E)$ in cases \textup{A}$_{15}$, \textup{A}$_{16}$, \textup{B}$_{6}$, \textup{B}$_{16}$, and \textup{B}$_{17}$ has the same equational theory as the monoid $\mathcal{K}_2$, and therefore, has the system~\eqref{eq:basis c3} as its \ib.
\end{theorem}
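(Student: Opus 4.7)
The plan is, for each of the five monoids $G(E)$, to prove both inclusions between its equational theory and that of $\mathcal{K}_2$. For the inclusion $\textup{id}(G(E))\subseteq\textup{id}(\mathcal{K}_2)$ I would exhibit $\mathcal{K}_2$ as a divisor of $G(E)$: in cases \textup{A}$_{15}$, \textup{A}$_{16}$, \textup{B}$_{16}$, \textup{B}$_{17}$ the five elements $1,a_1,a_2,a_1a_2,a_2a_1$ form a copy of $\mathcal{K}_2$ inside $\mathcal{K}_3$ that, by direct inspection of Table~\ref{tb:classification}, is disjoint from the relevant ideal $I$ and hence embeds into $G(E)=\mathcal{K}_3/I$; in case \textup{B}$_6$ the ideal contains $a_2$, so one uses instead the copy of $\mathcal{K}_2$ generated by $\{a_1,a_3\}$. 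For the substantive opposite inclusion I would exploit the uniform observation that the two-element set $I_{B_{17}}=\{a_2a_3a_1a_2,\,a_3a_2a_1\}$ is contained in every one of the five ideals of the table (for \textup{B}$_6$ both members contain $a_2$), which makes each $G(E)$ a homomorphic image of $\mathcal{K}_3/I_{B_{17}}$ and reduces the task to verifying the basis~\eqref{eq:basis c3} in this single $17$-element monoid.

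To verify \eqref{eq:basis c3} in $\mathcal{K}_3/I_{B_{17}}$, I let $u,v,w\in\mathcal{K}_3$ denote the images of the variables under an arbitrary substitution and separate two regimes. If $c(u)\cup c(v)\cup c(w)\subseteq\{a_i,a_j\}$ for some $i<j$, then all images lie in a 2-generated submonoid of $\mathcal{K}_3$ isomorphic to $\mathcal{K}_2$, and the identities hold by Theorem~\ref{thm:fbp for Cn}\emph{a}. The interesting regime is when the total content is $\{a_1,a_2,a_3\}$, and here the heart of the argument is the following structural lemma, which I would prove by direct computation with the Kiselman relations together with Lemma~\ref{lem:reduction1}: \emph{if $p,q\in\mathcal{K}_3$ both have content $\{a_1,a_2,a_3\}$, then $pq\in I_{B_{17}}$}. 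Since $\mathcal{K}_3$ contains exactly eight full-content elements (the six permutations of $a_1a_2a_3$ together with the two length-four words $a_2a_3a_1a_2$ and $a_2a_1a_3a_2$), the lemma reduces to inspecting at most $64$ products, each of which collapses via iterated application of $a_ia_ja_i=a_ja_i$ (for $i<j$) and the longer reduction $a_1a_2a_3a_1=a_2a_3a_1$ to either $a_3a_2a_1$ or $a_2a_3a_1a_2$.

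Granted the structural lemma, the identity $(xy)^2\bumpeq(yx)^2$ is immediate in the full-content regime, since $uv$ and $vu$ both have full content, whence $(uv)^2$ and $(vu)^2$ lie in $I_{B_{17}}$ and both equal~$0$ in the quotient. For $xyxzx\bumpeq xyzx$, when $c(u)=\{a_1,a_2,a_3\}$ each of $uv,uwu,wu$ is of full content and the lemma again gives $uvuwu,uvwu\in I_{B_{17}}$ at once; the residual subcase $c(u)\subsetneq\{a_1,a_2,a_3\}$, with the missing letter appearing in $v$ or $w$, requires a finite but slightly fussier case analysis in which one uses Lemma~\ref{lem:reduction1} to rewrite both $uvuwu$ and $uvwu$ and checks in each configuration that the two are either equal in $\mathcal{K}_3$ or both absorbed into $I_{B_{17}}$. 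I expect this last subcase to be the main obstacle, because the equality $uvuwu=uvwu$ need not hold in $\mathcal{K}_3$ itself --- the identity $xyxzx\bumpeq xyzx$ belongs only to $J_2$, not to $J_3$ --- so one genuinely needs the extra collapsing provided by the quotient. As a sanity check, case \textup{B}$_6$ admits a much cleaner stand-alone argument: here $G(E)$ is isomorphic to $\mathcal{K}_2$ with an absorbing zero adjoined, and every identity in~\eqref{eq:basis c3} has matching variable sets on both sides, so each such identity lifts automatically from $\mathcal{K}_2$ to $\mathcal{K}_2\cup\{0\}$.
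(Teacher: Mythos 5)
Your strategy is sound, and it is in fact the alternative route the paper itself mentions in the remark following its proof: instead of verifying, as the paper's main argument does, that \emph{every} identity of $J_2$ written in the normal form of Corollary~\ref{cor:uxv} holds in $\mathcal{K}_3/K$ (where $K=\{a_2a_3a_1a_2,a_3a_2a_1\}$ is your $I_{B_{17}}$), you reduce to the same quotient and check only the two identities of the finite basis~\eqref{eq:basis c3}, then invoke Theorem~\ref{thm:fbp for Cn}a together with $\mathcal{K}_2\cong\mathcal{C}_3$. The divisor half of your argument is correct, and your handling of case \textup{B}$_6$ via the submonoid generated by $a_1,a_3$ is actually more careful than the paper's own wording, since the submonoid generated by $a_1,a_2$ does meet the ideal $\mathcal{K}_3a_2\mathcal{K}_3$. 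Your structural lemma is true: for $p,q\in\mathcal{K}_3$ of full content, Lemma~\ref{lem:reduction1} lets one delete every occurrence of $a_1$ in $pq$ except the last one (coming from $q$) and every occurrence of $a_3$ after the first one (coming from $p$), after which fullness of the contents of $p$ and $q$ forces the product to collapse to $a_3a_2a_1$ or $a_2a_3a_1a_2$; so the $64$-product inspection you propose indeed goes through. Granting it, your disposal of $(xy)^2\bumpeq(yx)^2$ and of $xyxzx\bumpeq xyzx$ in the case $c(\bar x)=\{a_1,a_2,a_3\}$ is correct, as is the two-letter regime (values of content inside $\{a_i,a_j\}$ multiply within a copy of $\mathcal{K}_2$ disjoint from $K$).

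The genuine shortfall is the residual subcase of $xyxzx\bumpeq xyzx$, namely $c(\bar x)\subsetneq\{a_1,a_2,a_3\}$ with total content full, which you only describe as a ``finite but slightly fussier case analysis'' and do not carry out. This is precisely where the substance of the verification lies: the identity fails in $\mathcal{K}_3$ itself (for instance $\bar x=a_2$, $\bar y=a_3$, $\bar z=a_1$ gives $a_3a_2a_1$ on one side and $a_2a_3a_1a_2$ on the other), so in each configuration one must actually establish either equality in $\mathcal{K}_3$ or membership of both values in $K$, and neither your structural lemma nor Lemma~\ref{lem:reduction1} delivers this automatically. This is the part of the work that the paper's proof performs in its Subcases 1--3 via the normal form of Corollary~\ref{cor:uxv}. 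Since $\mathcal{K}_3/K$ is a $17$-element monoid, the missing check is finite and the plan would certainly succeed (the paper's remark notes it can even be done by machine), but as written your proposal is a viable outline rather than a complete proof: its decisive computation is left undone, and you yourself flag it as the main obstacle.
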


\begin{proof}
Clearly, the submonoid $M$ of the Kiselman monoid $\mathcal{K}_3$ generated by $a_1$ and $a_2$ is isomorphic to the monoid $\mathcal{K}_2$. On the other hand, $M$ has empty intersection with each of the ideals $I$ corresponding to the non-singleton monoids of the form $G(E)$ whence $M$ embeds into $\mathcal{K}_3/I$. We conclude that $\mathcal{K}_2$ is isomorphic to a submonoid in each non-singleton monoid of the form $G(E)$, and therefore, all identities holding in any of the latter monoids hold also in $\mathcal{K}_2$.

The ideal $K=\{a_2a_3a_1a_2,a_3a_2a_1\}$ is contained in each of the ideals listed in the last column of  Table~\ref{tb:classification} whence each monoid of the form $G(E)$ is a homomorphic image of the monoid $\mathcal{K}_3/K$. Thus, in order to show that every identity holding in $\mathcal{K}_2$ holds also in each monoid of the form $G(E)$, it suffices to verify this for the monoid $\mathcal{K}_3/K$.

Recall that by Theorem~\ref{thm:identities of HKn} the set of identities of $\mathcal{K}_2$ coincides with the set $J_2=\{w\bumpeq w' \mid w\sim_2 w'\}$. By Corollary~\ref{cor:uxv}, the collection of all identities of the form~\eqref{eq:uxv} satisfying~\eqref{eq:inclusions} and such that $k+\ell\ge2$ forms an \ib\ for $J_2$. Thus, we take an arbitrary identity
\[
w=u_k\cdots u_1v_1\cdots v_\ell\bumpeq u_k\cdots u_1xv_1\cdots v_\ell=w'
\]
of this form and show that it holds in $\mathcal{K}_3/K$.

Let $\varphi\colon X^+\to\mathcal{K}_3/K$ be an arbitrary homomorphism. We adopt the notational conventions of the proof of Theorem~\ref{thm:identities of HKn}; in particular, we write $\bar{u}$ instead of $u\varphi$ for $u\in X^+$. Let
\[
b=\bar{u}_k\cdots \bar{u}_1\bar{v}_1\cdots\bar{v}_\ell,\quad
d=\bar{u}_k\cdots \bar{u}_1\bar{x}\bar{v}_1\cdots\bar{v}_\ell.
\]
We aim to show that $b=d$. Clearly, the equality holds provided that $\varphi$ maps some letter from $c(w)=c(w')$ to the zero of the monoid $\mathcal{K}_3/K$. Therefore, we may additionally assume that $\bar{y}\in\mathcal{K}_3\setminus  K$ for every letter $y\in c(w)=c(w')$; in other words, we may assume that the homomorphism $\varphi$ takes its values in $\mathcal{K}_3$ and modify our aim as follows: to show that either $b=d$ or $b,d\in K$.

First consider the case when $c(\bar{u}_k\bar{v}_\ell)\ne\{a_1,a_2,a_3\}$. Taking into account the inclusions~\eqref{eq:inclusions1} that follow from~\eqref{eq:inclusions}, we conclude that in this case $\varphi$ sends all the letters occurring in $w$ and $w'$ into a submonoid of $\mathcal{K}_3$ generated by two of the generators $a_1,a_2,a_3$. Each such submonoid is easily seen to be isomorphic to the monoid $\mathcal{K}_2$, and the latter monoid satisfies all identities in $J_2$ by Theorem~\ref{thm:identities of HKn}. Hence $b=d$ in this case.

Now assume that $c(\bar{u}_k\bar{v}_\ell)=\{a_1,a_2,a_3\}$. Here we subdivide our analysis into three subcases.

\smallskip

\emph{\textbf{Subcase 1:}} $k=0$, $\ell\ge 2$. In this subcase we have
\[
b=\bar{v}_1\cdots\bar{v}_\ell,\quad
d=\bar{x}\bar{v}_1\cdots\bar{v}_\ell,\quad
\text{and}\quad c(\bar{v}_\ell)=\{a_1,a_2,a_3\}.
\]
Applying Lemma~\ref{lem:reduction1}a as in the proof of Theorem~\ref{thm:identities of HKn}, we can rewrite $b$ and $d$ as follows:
\[
b=\bar{v}_1^{[\ell,3]}\cdots\bar{v}_{\ell-1}^{[2,3]}\bar{v}_{\ell},\quad
d=\bar{x}^{[\ell+1,3]}\bar{v}_1^{[\ell,3]}\cdots\bar{v}_{\ell-1}^{[2,3]}\bar{v}_{\ell}.
\]
If $\ell>2$, then $\bar{x}^{[\ell+1,3]}=1$ and $b=d$. The same conclusions follow provided that  $a_3\notin c(\bar{x})$. Hence we may assume that $\ell=2$ and $a_3\in c(\bar{x})$. Then $\bar{x}^{[\ell+1,3]}=\bar{x}^{[3,3]}$ must be a power of $a_3$ and, since $a_3^2=a_3$, we conclude that
\[
b=\bar{v}_1^{[2,3]}\bar{v}_2,\quad
d=a_3\bar{v}_1^{[2,3]}\bar{v}_2=a_3b.
\]
Clearly, Lemma~\ref{lem:reduction1} allows us to retain in $b$ only the leftmost occurrence of $a_3$ and only the rightmost occurrence of $a_1$. Since $a_1$ occurs in $\bar{v}_2$ and, in view of~\eqref{eq:inclusions1}, $a_3$ occurs in $\bar{v}_1^{[2,3]}$, the leftmost occurrence of $a_3$ precedes the rightmost occurrence of $a_1$ in $b$. We see that $b=b_1a_3b_2a_1b_3$ where each of $b_1,b_2,b_3$ is either 1 or a power of $a_2$, see Fig.~\ref{fig:Case 1}, in which $s$ stands for the element obtained from $\bar{v}_1^{[2,3]}$ by removing all occurrences of $a_3$ except the leftmost one, while $t$ denotes the element obtained from  $\bar{v}_2$ by removing all occurrences of $a_1$ except the rightmost one and all occurrences of $a_3$.
\begin{figure}[ht]
\centering
\unitlength=.7mm
\begin{picture}(170,17.5)(0,0)
\gasset{AHnb=0}
\drawline[linewidth=.5](5,7.5)(165,7.5)
\drawline(5,0)(5,15)
\drawline(165,0)(165,15)
\drawline(65,7.5)(65,15)
\drawline(37,7.5)(37,0)
\drawline(45,7.5)(45,0)
\drawline(85,7.5)(85,0)
\drawline(93,7.5)(93,0)
\gasset{AHnb=1}
\drawline(35,12.5)(65,12.5)
\drawline(35,12.5)(5,12.5)
\drawline(115,12.5)(65,12.5)
\drawline(115,12.5)(165,12.5)
\drawline(65,2.5)(45,2.5)
\drawline(65,2.5)(85,2.5)
\drawline(21,2.5)(5,2.5)
\drawline(21,2.5)(37,2.5)
\drawline(129,2.5)(165,2.5)
\drawline(129,2.5)(93,2.5)
\put(36,15){$s$}
\put(114,15){$t$}
\put(20,-3){$b_1$}
\put(64,-3){$b_2$}
\put(128,-3){$b_3$}
\put(38.5,1.5){$a_3$}
\put(86.5,1.5){$a_1$}
\end{picture}
\caption{Structure of $b$ in Subcase 1}\label{fig:Case 1}
\end{figure}
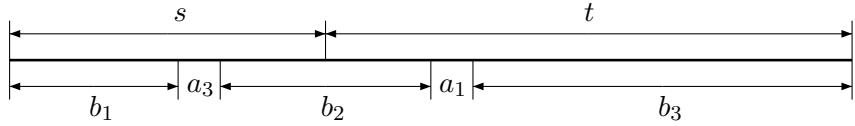

If $b_1=1$, we have $d=a_3b=a_3^2b_2a_1b_3=a_3b_2a_1b_3=b$. If $b_2\ne 1$, then taking into account that $a_2^2=a_2$, we obtain that $b$ has the product $a_3a_2a_1$ as a factor. If $b_1\ne 1$ while $b_2=1$, we observe that $b_3\ne 1$ since $a_2$ occurs in $\bar{v}_2$ and thus in $t$. Therefore, $b$ has the product $a_2a_3a_1a_2$ as a factor. Both $a_3a_2a_1$ and $a_2a_3a_1a_2$ belong to the ideal $K$ whence in either of the two remaining cases we have $b\in K$ and $d=a_3b\in K$.

\smallskip

\emph{\textbf{Subcase 2:}} $k\ge 2$, $\ell=0$. This subcase is dual to Subcase 1.

\smallskip

\emph{\textbf{Subcase 3:}} $k,\ell\ge 1$. Applying Lemma~\ref{lem:reduction1} as in the proof of Theorem~\ref{thm:identities of HKn}, we can rewrite $b$ and $d$ as follows:
\begin{align*}
b&=\bar{u}_k\bar{u}_{k-1}^{[1,2]}\cdots\bar{u}_1^{[1,3-k+1]}\bar{v}_1^{[\ell,3]}\cdots
\bar{v}_{\ell-1}^{[2,3]}\bar{v}_{\ell},\\
d&=\bar{u}_k\bar{u}_{k-1}^{[1,2]}\cdots\bar{u}_1^{[1,3-k+1]}\bar{x}^{[\ell+1,3-k]}
\bar{v}_1^{[\ell,3]}\cdots\bar{v}_{\ell-1}^{[2,3]}\bar{v}_{\ell}.
\end{align*}
If $k+\ell>2$, then $\bar{x}^{[\ell+1,3-k]}=1$ and $b=d$. The same conclusions follow if $a_2\notin c(\bar{x})$. Hence we may assume that $k=\ell=1$ and $a_2\in c(\bar{x})$. Then $\bar{x}^{[\ell+1,3-k]}=\bar{x}^{[2,2]}$ is a power of $a_2$ and, since $a_2^2=a_2$, we conclude that
\[
b=\bar{u}_1\bar{v}_1,\quad d=\bar{u}_1a_2\bar{v}_1.
\]
Now we apply Lemma~\ref{lem:reduction1} to $b$ and $d$ to eliminate all occurrences of $a_1$ except the rightmost one and all occurrences of $a_3$ except the leftmost one. If after that either $\bar{u}_1$ ends with $a_2$ or $\bar{v}_1$ starts with $a_2$, then the desired equality $b=d$ follows from $a_2^2=a_2$. In the remaining cases either
\[
b=b_1a_1a_3b_2,\quad d=b_1a_1a_2a_3b_2
\]
or
\[
b=b_1a_3a_1b_2,\quad d=b_1a_3a_2a_1b_2,
\]
where $b_1=a_2^p$ and $b_2=a_2^q$ for some $p,q>0$  because $a_2$ occurs in both $\bar{u}_1$ and $\bar{v}_1$ according to~\eqref{eq:inclusions1}. In the former case $d=a_2^pa_1a_2a_3b_2=a_2^pa_1a_3b_2=b$ in view of the relation $a_2a_1a_2=a_2a_1$ holding in $\mathcal{K}_3$. In the latter case, $b$ has the product $a_2a_3a_1a_2$ as a factor while $d$ has the product $a_3a_2a_1$ as a factor. Since both $a_3a_2a_1$ and $a_2a_3a_1a_2$ belong to the ideal $K$, we conclude that $b,d\in K$.
\end{proof}

\begin{remark}
As an alternative to the above proof, one could directly verify (using a computer) that the monoid $\mathcal{K}_3/K$ satisfies the identities~\eqref{eq:basis c3} and then refer to Theorem~\ref{thm:fbp for Cn}a. In fact, we have performed such a computation but preferred to include ``manual'' proof that uses only basic combinatorics of the monoid $\mathcal{K}_3/K$.
\end{remark}

\subsection{The \fbp\ for general Hecke--Kiselman monoids}
\label{subsec:HK}
Theorem~\ref{thm:identities of HKn} describes the equational theory of the Hecke--Kiselman monoids $\mathcal{HK}_\Theta$ such that the anti-reflexive binary relation $\Theta$ satisfies $\Theta_C\subseteq\Theta\subseteq\Theta_K$, and Corollary~\ref{cor:fbp for HKn} solves the \fbp\ problem for these monoids. However, questions of the same sort are of interest for an arbitrary $\Theta$. Here we discuss two open problems related to the equational theory of Hecke--Kiselman monoids.

Let $\Theta$ be an anti-reflexive binary relation on the set $V_n=\{1,2,\dots,n\}$. It is known~\cite[Theorem~16]{GM11} that the Hecke--Kiselman monoid $\mathcal{HK}_\Theta$ determines the graph $(V_n,\Theta)$ up to isomorphism, that is, for an arbitrary anti-reflexive binary relation $\Phi$ on the set $V_m=\{1,2,\dots,m\}$, the monoids  $\mathcal{HK}_\Theta$ and $\mathcal{HK}_\Phi$ are isomorphic if and only if the graphs $(V_n,\Theta)$ and $(V_m,\Phi)$ are isomorphic. On the other hand, Theorem~\ref{thm:identities of HKn} reveals that Hecke--Kiselman monoids with non-isomorphic underlying graphs can be \emph{equationally equivalent}, i.e., can have the same equational theory. For instance, if $n\ge 3$, then the Catalan monoid $\mathcal{C}_{n+1}=\mathcal{HK}_{\Theta_C}$ and the Kiselman monoid $\mathcal{K}_n=\mathcal{HK}_{\Theta_K}$ are non-isomorphic but these monoids are equationally equivalent by Theorem~\ref{thm:identities of HKn}. This observation gives rise to the following question.

\begin{question}
\label{que:ee for HK}
Let $\Theta$ and $\Phi$ be anti-reflexive binary relations on the sets $V_n=\{1,2,\dots,n\}$ and respectively $V_m=\{1,2,\dots,m\}$. Under which necessary and sufficient conditions on the graphs $(V_n,\Theta)$ and $(V_m,\Phi)$ are the Hecke--Kiselman monoids $\mathcal{HK}_\Theta$ and $\mathcal{HK}_\Phi$ equationally equivalent?
\end{question}

Similarly, with respect to the \fbp, the next question appears to be quite natural.

\begin{question}
\label{que:fbp for HK} Let $\Theta$ be an anti-reflexive binary
relation on the set $V_n=\{1,2,\dots,n\}$. Under which necessary
and sufficient conditions on the graph $(V_n,\Theta)$ is the
Hecke--Kiselman monoid $\mathcal{HK}_\Theta$ \fb?
\end{question}

Questions~\ref{que:ee for HK} and~\ref{que:fbp for HK} relate the study of Hecke--Kiselman monoids to the promising  area of investigations whose aim is to interpret graph-theoretical properties within equational properties of semigroups. Such an interpretation may shed new light on complexity-theoretical aspects of the theory of \sgp\ identities as a whole and of the \fbp\ in particular; see \cite{JM06} for an impressive instance of this approach. It is to expect, however, that Questions~\ref{que:ee for HK} and~\ref{que:fbp for HK} may be rather hard---for comparison, recall that in general it is still unknown for which graphs $(V_n,\Theta)$ the Hecke--Kiselman monoid $\mathcal{HK}_\Theta$ is finite, and even for $n=4$, the classification of graphs with finite Hecke--Kiselman monoids has turned out to be non-trivial, see~\cite{AD13}.

\subsection{The \fbp\ for involuted Kiselman monoids}
\label{subsec:involution}

A \sgp\ $S$ is called an \emph{\is} if it admits a unary operation
$s\mapsto s^*$ (called \emph{involution}) such that
$(st)^*=t^*s^*$ and $(s^*)^*=s$ for all $s,t\in S$. Whenever $S$
is equipped with a natural involution, it appears to be reasonable
to investigate the identities of $S$ as an algebra of type (2,1);
see~\cite{ADPV14,ADV12a,ADV12b,Lee_F} for numerous examples of recent studies
along this line. Observe that adding an involution may radically
change the equational properties of a \sgp: a \fb\ \sgp\ may
become a \nfb\ \is\ and vice versa. Infinite examples of this sort
have been known since 1970s (see \cite[Section~2]{Vo01} for
references and a discussion); more recently, Jackson and the
second-named author~\cite{JV10} have constructed a \fb\ \fs\ that
becomes a \nfb\ \is\ after adding a natural involution, while
Lee~\cite{Lee_Q} has shown that the 6-element \nfb\ \sgp\
$\mathcal{L}$ defined by~\eqref{eq:L} admits an involution under
which it becomes a \fb\ \is.

Inspecting the relations~\eqref{eq:kn}, one can easily see that the map $a_i\mapsto a_{n-i+1}$ uniquely extends to an involution of the Kiselman monoid $\mathcal{K}_n$; in fact, this is the only anti-automorphism of $\mathcal{K}_n$ \cite[Proposition 20b]{KM09}. In the same way, this map extends to an involution of the Catalan monoid $\mathcal{C}_{n+1}$. To the best of our knowledge, the equational properties of Kiselman and Catalan monoids treated as \iss\ have not been considered so far, and the examples mentioned in the preceding paragraph indicate that these properties need not necessarily follow the patterns revealed by the results of Section~\ref{sec:new}. Thus, we conclude with a pair of interrelated questions.

\begin{question}
\label{que:ee for inv}
Are the Kiselman monoid $\mathcal{K}_n$ and the Catalan monoid $\mathcal{C}_{n+1}$ equationally equivalent as \iss?
\end{question}

Obviously, the epimorphism $\mathcal{K}_n\to\mathcal{C}_{n+1}$ constructed according to Proposition~\ref{prop:epimorphism} is in fact a homomorphism of \iss\ so that $\mathcal{C}_{n+1}$ satisfies all \is\ identities that hold in $\mathcal{K}_n$. The converse, however, is very far from being clear.

\begin{question}
\label{que:fbp for inv}
a) For which $n$ is the Kiselman monoid $\mathcal{K}_n$ \fb\ as an \is?

b) For which $n$ is the Catalan monoid $\mathcal{C}_n$ \fb\ as an \is?
\end{question}

\subsection*{Acknowledgment} The authors are grateful to Edmond W. H. Lee for useful discussions.

\medskip

\paragraph*{\emph{Added in proof}} Very recently, Lee~\cite{Lee_U} has found a sufficient condition under which a \nfb\ \sgp\ remains \nfb\ when equipped with an involution. It is easy to verify that the condition holds true for Kiselman and Catalan monoids treated as \iss, and therefore, combining the results of~\cite{Lee_U} with those of~\cite{Vo04} and of the present paper, one immediately deduces the following partial answer to Question~\ref{que:fbp for inv}: the Kiselman monoid $\mathcal{K}_n$ \nfb\ as an \is\ for each $n\ge4$ and the Catalan monoid $\mathcal{C}_n$ \nfb\ as an \is\ for each $n\ge5$.

\end{document}